\newcommand{\R}{{\mathbb R}}
\newcommand{\Rn}{{\mathbb{R}^3}}
\newcommand{\RN}{{\mathbb{R}^{3}}}
\newcommand{\be}{\begin{equation}}
\newcommand{\ee}{\end{equation}}
\newcommand{\n }{\nabla }
\newcommand{\od}{\omega^{2}}
\newcommand{\diff}{{\setminus}}
\newcommand{\pa}{\partial}
\newcommand{\grad}{\nabla}
\newcommand{\irn}{\int_{\Rn}}
\newcommand{\hsob}{H^{1}}
\newcommand{\huno}{H^1}
\newcommand{\hh}{H^1\times H^1}
\renewcommand{\H}{{\mathbb H}}
\renewcommand{\a }{\alpha}
\renewcommand{\b }{\beta }
\newcommand{\vep}{\varepsilon}
\newcommand{\eps}{\varepsilon}
\newcommand{\vfi}{\varphi}
\newcommand{\ovphi}{\bar{\phi}}
\newcommand{\ovpsi}{\bar{\psi}}
\newcommand{\ovu}{\overline{u}}
\newcommand{\ovv}{\overline{v}}
\newcommand{\g }{\gamma }
\newcommand{\ga }{\gamma }
\renewcommand{\t}{\theta}
\newcommand{\Ga}{\Gamma}
\renewcommand{\S}{\Sigma}
\newcommand{\om}{\omega}
\newcommand{\Ne}{{\mathcal N}}
\renewcommand{\SS}{{\mathbb S}}
\numberwithin{equation}{section}
\newtheorem{theorem}{Theorem}[section]
\newtheorem{proposition}[theorem]{Proposition}
\newtheorem{lemma}[theorem]{Lemma}
\newtheorem{definition}[theorem]{Definition}
\theoremstyle{definition}
\newtheorem{remark}[theorem]{Remark}
\newtheorem{remarks}[theorem]{Remarks}
\renewcommand{\dfrac}{\displaystyle\frac}
\newcommand{\brm}{\begin{remark}\rm}
\newcommand{\erm}{\end{remark}}
\newcommand{\bte}{\begin{theorem}}
\newcommand{\ete}{\end{theorem}}
\newcommand{\bpr}{\begin{proposition}}
\newcommand{\epr}{\end{proposition}}
\newcommand{\ble}{\begin{lemma}}
\newcommand{\ele}{\end{lemma}}
\newcommand{\beq}{\begin{equation}}
\newcommand{\eeq}{\end{equation}}
\newcommand{\bdm}{\begin{displaymath}}
\newcommand{\edm}{\end{displaymath}}
\numberwithin{equation}{section}
\newcommand{\rife}{\eqref}
\title[Semiclassical states for CNLS systems]
{Semiclassical states for weakly coupled \\ nonlinear Schr\"odinger systems}
\author[E.\ Montefusco]{Eugenio Montefusco}
\author[B.\ Pellacci]{Benedetta Pellacci}
\author[M.\ Squassina]{Marco Squassina}
\address{Dipartimento di Matematica
\newline\indent
Universit\`a degli Studi di Roma ``La Sapienza''
\newline\indent
P.le A.\ Moro 5, I-00185 Roma, Italy}
\email{montefusco@mat.uniroma1.it}
\address{Dipartimento di Scienze Applicate
\newline\indent
Universit\`a degli Studi di Napoli ``Parthenope''
\newline\indent
Via De Gasperi 5, I-80133 Napoli, Italy}
\email{benedetta.pellacci@uniparthenope.it}
\address{Dipartimento di Matematica e Applicazioni
\newline\indent
Universit\`a  di Milano Bicocca
\newline\indent
Via R.\ Cozzi 53, I-20125 Milano, Italy}
\email{marco.squassina@unimib.it}
\thanks{The first and the second author are supported by the MIUR national
research project ``Variational Methods and Nonlinear Differential Equations'',
while the third author is supported by the MIUR national research
project ``Variational and Topological Methods in the Study of
Nonlinear Phenomena'' and by the Istituto Nazionale di Alta
Matematica (INdAM)}
\subjclass[2000]{34B18, 34G20, 35Q55}
\keywords{Weakly coupled nonlinear Schr\"odinger systems,
concentration phenomena, semiclassical limit,
ground states, critical point theory, Clarke's subdifferential}
\begin{document}
\begin{abstract}
We consider systems of weakly coupled Schr\"odinger equations with nonconstant
potentials and we investigate the existence of nontrivial nonnegative solutions
which concentrate around local minima of the potentials. We obtain
sufficient and necessary conditions  for a sequence of least energy
solutions  to concentrate.
\end{abstract}
\maketitle

%----------------------------------------------------------------------

\section{Introduction}
Starting from the celebrated works \cite{berplions,coffman,strauss},
the recent years have been marked out by
an ever-growing interest in the study of standing wave solutions to
the semi-linear Schr\"odinger equation (NLS)
\begin{equation*}
i\phi_t+\Delta\phi+|\phi|^2\phi=0 \qquad\text{in $\R^3\times(0,\infty)$},
\end{equation*}
where i denotes the imaginary unit.
As a related problem, a large amount of work
(see \cite{abc,ams,ambook,dpf,fw,wangzeng}
and references therein) has been devoted to
the study of the semiclassical states for (NLS), namely the study
of the singularly perturbed equation
$-\eps^2\Delta u + V(x)u=u^3$ in $\R^3$ for $\eps$ going to zero,
where $V(x)$ is a potential modeling the action of external forces.
Under different hypotheses on the potential $V$ it has been proved
that there exists a family of solutions $\{u_\eps\}$ which exhibits a
spike shape around the non-degenerate critical points of $V$ and
decays elsewhere.
\\
From a physical point of view,  the nonlinear Schr\"odinger
equation arises  in the study of  nonlinear optics in isotropic materials,
for instance the pro\-pagation of pulses in a {\em single-mode} nonlinear optical fiber.
However, a single-mode optical fiber is actually {\em bi-modal} due to the
presence of some birefringence effects which tend to split a pulse into
two pulses in two different polarization directions.
Menyuk \cite{men} showed that, under various simplifications and
variable scalings, the complex amplitudes of the two wave packets
$\phi$ and $\psi$ in a birefringence optical fiber are governed by a
system of two coupled nonlinear Schr\"odinger equations ((CNLS) for short).
Looking for standing wave solutions leads to study the following
elliptic system
\beq
\label{prob0}
\begin{cases}
-\Delta u+u=u^3+bv^{2}u & \text{in $\R^3$} , \\
-\Delta v+\od v=v^3+bu^{2}v & \text{in $\R^3$},
\end{cases}
\eeq
where $b$ is a real-valued cross phase coefficient depending upon the
ani\-so\-tro\-py of the fiber, and $\omega$ is the frequencies ratio of the two
waves. Physically, $b>0$ is known as the attractive case, whereas $b<0$ is the
repulsive case. Apart from some special cases, the study of \eqref{prob0} is
pretty complicated.
This because of the presence of {\sl semitrivial} or {\sl scalar}
solutions, indeed, there always exist the solutions $(u,0),\, (0,v)$ with
$u,\,v$ solutions of the single equations in  \eqref{prob0}; then it becomes
physically relevant to know whether or not  a solution found  is really vectorial,
i.e. with both nontrivial components. Recently, this problem has been tackled
in  \cite{ac0,ac,mmp} by means of different methods. In particular, in
\cite{mmp} it has been proved that for $b$ sufficiently small every ground
state solution necessarily has one trivial component, while for $b$ sufficiently
large the ground state solutions have both positive components.
As far as concern the semiclassical states, we are naturally lead to study
the system
\begin{equation}
\label{problema}
\tag{$S_\eps$}
\begin{cases}
-\eps^2\Delta u + V(x)u =u^3+bv^2u & \text{in $\R^3$}, \\
\noalign{\vskip2pt}
-\eps^2\Delta v + W(x)v =v^3+bu^2v & \text{in $\R^3$}.
\end{cases}
\end{equation}
This is the goal of this paper.  We will assume that the potentials
$V,W$ are H\"older continuous functions in $\R^{3}$, bounded from
below away from zero and $\eps$ is a small parameter which will approach
zero. Our intent is to show the existence, for small $\eps$, of a
nonnegative (i.e. with nonnegative components) least energy solution
$(u_\eps,v_\eps)$ and  then  to prove sufficient and necessary conditions concerned
with the concentration of  $(u_\eps,v_\eps)$  around  the local minimum
(possibly degenerate) points of the potentials, which are supposed to be in
 the same region.
Aiming to use variational methods,  we will consider the functional
$J_\eps$ associated to \eqref{problema}, which satisfies all the assumptions of the
Mountain Pass theorem (\cite{ambrab}) except for the Palais-Smale condition
since we do not assume any global condition on $V,\,W$.
Then, we will use a vectorial adaptation of the argument in \cite{dpf}; namely
we will perform a pe\-na\-li\-za\-tion of $J_\eps$, exploiting the homogeneity
of the nonlinearities,  outside the region containing the minimum points  of
the potentials, so that we will consider a modified  functional  which
satisfies all the hypotheses of the Mountain Pass theorem
including the Palais-Smale condition. To show the concentration, we will argue on
the sum $u_\eps(x)+v_\eps(x)$ proving that it is uniformly, with
respect to $\eps$,  bounded away from zero, and by taking advantage of the known
properties of the autonomous system we can show that $u_\eps(x)+v_\eps(x)$
has exactly one global maximum point, which tends to a minimum point of $V$ or
$W$. Here we cannot be more precise without assuming some conditions on $b$
as one between $u_\eps$ and $v_\eps$ may vanish or not as $\eps\to 0$.
Namely, we can show that for $b$ smaller than a positive constant $b_0$
(defined in \eqref{b-def}) either $u_\eps$ or $v_\eps$ necessarily expires and
the other tends--up to scalings--to the least energy solution of the
corresponding autonomous nonlinear Schr\"odinger equation.
When $b$ is large (greater than a positive constant $b_1$ defined in \eqref{b-def})
both $u_\eps$ and $v_\eps$ survive and we recover
a least energy vectorial solution of the autonomous system (see Theorem \ref{concentrazione}).
As physically reasonable, for materials with low anisotropy, one component of the
system is predominant upon the other, since the low birefringence is not able
to split a soliton-type solution in two distinct pulses. Recently,
it was studied in \cite{pomp} the repulsive case $b<0$ (a model for the
Bose-Einstein condensation). We stress that the methods used therein are
very different from ours, since the change of sign of the constant $b$ produces a different
behavior of the solutions (see also \cite{fp} for the case of a single equation).
\vskip2pt
Concerning the necessary conditions for a sequence of solutions to concentrate,
contrary to the scalar case with power nonlinearity (\cite{abc,ams}), we cannot in general
derive an explicit representation of the so called ground energy function $\Sigma$ (see
formulas \eqref{defnehari}-\eqref{defsigma}).
The underlying philosophy is that when the limit problem \eqref{prob0} lacks
of uniqueness, then the ground energy function, which will be shown to be at least locally
Lipschitz continuous, may lose its additional smoothness properties.
Nevertheless, in this framework, on the line of \cite{secsqu}, we prove
that a necessary condition for a family of solutions $(u_\eps,v_\eps)$ to
concentrate around a given point $z$,  is that $z$ is a critical point, not
necessarily a minimum point,  of $\Sigma$ in the sense of the Clarke
subdifferential $\partial_C$, that is $0\in \partial_C\Sigma(z)$.
Moreover, due to the previously mentioned characterization of least energy
solutions in terms of the coupling parameter $b$ (see proposition \ref{lili}),
we partition the concentration points $\mathcal{E}$ into three classes
$\mathcal{E}=\mathcal{E}_V\cup\mathcal{E}_W\cup\mathcal{E}_\Sigma,$
where
$$
\mathcal{E}_V\times \mathcal{E}_W\times \mathcal{E}_\Sigma
\subset {\rm Crit}(V)\times {\rm Crit}(W)\times {\rm Crit}_C(\Sigma).
$$
denoting ${\rm Crit}(f)$ (resp.\ ${\rm Crit}_C(f)$) the set of
classical (resp.\ in the sense of the Clarke subdifferential)
critical points of a function $f$. In this partition we can see again that if a family of
solution concentrates around a given point then we derive as a limit
problem either a single equation or the entire system, depending on the value of $b$.
Namely, we will find some positive constants $b^\infty_0<b^\infty_1<b^\infty_2$
such that for  $b<b^\infty_0$ we obtain the single equation as the limit problem,
for $b>b^\infty_2$ we show that if a family of least energy solutions concentrates, then
its scaling around a minimum point of the potentials converges to a real vectorial
least energy solution of the autonomous system.
\vskip8pt
The plan of the paper is the following.
\vskip2pt
\noindent
In Section \ref{statem} we introduce the functional setting and the statements
of the main results.
In Section \ref{suff-ps} we proceed with the proof of the
main result regarding the sufficient conditions for concentration.
In Section \ref{nec-ps} we prove the main achievement on the necessary
conditions for the concentration.

\section{The functional framework and main statements}
\label{statem}

Let $V(x)$ and $W(x)$ be H\"older continuous functions in $\R^{3}$ and
suppose that there exists a positive constant $\a$ such that
\beq\label{positiveVW}
V(x),\,W(x)\geq\a \qquad\text{for all $x\in\Rn$}.
\eeq
In order to study $(S_{\eps})$ we use variational methods, so that we
introduce the Hilbert space
\bdm
\H=\Big\{(u,v)\in\huno\times\huno:\,\,\int_{\R^{3}}V(x)u^{2}<\infty,\,\,\
\int_{\R^{3}}W(x)v^{2}<\infty \Big\},
\edm
where $H^1=H^1(\R^3)$ is the usual first order Sobolev space in $\R^3$.
The norm in $\H$ is  $\|(u,v)\|_{\H}^2=\|u\|_{\eps,V}^{2}+\|v\|_{\eps,W}^{2}$,
where
$$
\|u\|_{\eps,V}^{2}=\eps^{2}\|\grad u\|_{2}^{2}+\int_{\Rn}V(x)u^{2},\qquad
\|v\|_{\eps,W}^{2}=\eps^{2}\|\grad v\|_{2}^{2}+\int_{\Rn} W(x)v^{2},
$$
being $\eps$ a small parameter and where we denote with $\|\cdot\|_p$
the standard norm in $L^p=L^p(\Rn)$ for $1\leq p\leq\infty$.
We will study the functional $J_{\eps}:\H\to\R$ defined by
\bdm
J_{\eps}(u,v)=\dfrac{1}{2}\|u\|_{\eps,V}^{2}+\dfrac{1}{2}\|v\|_{\eps,W}^{2}
-\int_{\R^{3}}F(u,v),
\edm
where we have set
$$
F(u,v)=\dfrac{1}{4}(u^{4}+2bu^2v^{2}+v^{4}),\qquad\text{with $b>0$}.
$$
It is easily  checked that $J_{\eps}$ is well defined and of class $C^{1}$ on $\H$.
A nontrivial solution of problem $(S_\vep)$ is a couple $(u_\vep,v_\vep)\neq (0,0)$
in $\H$, critical point of $J_{\eps}$.
\\
We denote by $B(x,r)$ the open ball centered at $x$ with radius $r$ and
with $\partial B(x,r)$ its boundary.

As far as concern the sufficient conditions for the concentration to occur, we will
prove two main results; the first is the following.

%%%%%%TEO CONCENTRAZIONE
\begin{theorem}\label{concentrazione}
Assume \rife{positiveVW} and that there exist $z\in\Rn$ and $r>0$ such that
\begin{align}
\label{minimoV}
V_0&=\min_{B(z,r)}V<\min_{\pa B(z,r)}V, \\
\noalign{\vskip2pt}
\label{minimoW}
W_0&=\min_{B(z,r)}W<\min_{\pa B(z,r)}W.
\end{align}
Then there exists $\eps_{0}>0$ such that, for every $0<\eps<\eps_{0}$,
problem $(S_{\eps})$ admits a nontrivial solution $(u_{\eps},v_{\eps})\in\H$,
$u_{\eps},\,v_{\eps}\geq0$, such that the following facts hold:
\vskip8pt
\noindent
{\rm (i)} $(u_{\eps}+v_{\eps})$ admits exactly one global maximum
point $x_{\eps}\in B(z,r)$ with
\be\label{conveVW}
\lim_{\eps\to 0} V(x_{\eps})=V_0
\quad
\text{ or }
\quad
\lim_{\eps\to 0} W(x_{\eps})=W_0.
\ee
Furthermore, there exist $\mu_1,\mu_2>0$ such that, for every $x\in\R^3$,
$$
u_\eps(x)+v_\eps(x)\leq \mu_1e^{-\mu_2\textstyle\frac{|x-x_\eps|}{\eps}}.
$$
\vskip8pt
\noindent{\rm (ii)} Let us define $b_0<b_1$ by
\begin{equation}\label{b-def}
b_0= \max \left\{\sqrt[4]{{\dfrac{W_0}{V_0}}},
\sqrt[4]{{\dfrac{V_0}{W_0}}}\right\},
\qquad
b_1=\max\left\{h\left(\sqrt{{\dfrac{W_0}{V_0}}}\right),
h\left(\sqrt{{\dfrac{V_0}{W_0}}}\right)\right\},
\end{equation}
 with
\begin{equation}\label{defh1}
h(s) =\min \left\{\frac{s}{32}\left(7+\frac1{s^2}\right)^2-1, \frac{s^2+3}4\right\}.
\end{equation}
\vskip4pt
\noindent
Then the following facts hold:
\vskip4pt
\noindent
- if $b<b_0$, there exists $\sigma>0$ such that  for all $0<\eps<\eps_0$,
$$
\text{either $u_\eps(x_\eps)\to 0$ and $v_\eps(x_\eps)\geq\sigma$ or $v_\eps(x_\eps)\to 0$
and $u_\eps(x_\eps)\geq\sigma$}.
$$
\vskip2pt
\noindent
- if $b>b_1$, then there exist $\sigma>0$  such that, for all $0<\eps<\eps_0$,
\begin{equation*}
u_\eps(x_\eps)\geq\sigma,\qquad v_\eps(x_\eps)\geq\sigma.
\end{equation*}
\end{theorem}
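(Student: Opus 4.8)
The plan is to adapt the penalization scheme of del Pino and Felmer \cite{dpf} to the system and to run the concentration analysis on the sum $u_\eps+v_\eps$. First I fix the region $\Lambda=B(z,r)$ and, exploiting the $4$-homogeneity of $F$, I replace the nonlinearity for $x\notin\Lambda$ by a term growing at most linearly with a small coefficient (of order $\alpha/k$, $k\gg1$), producing a modified functional $\widetilde J_\eps$ on $\H$. Since the penalized nonlinearity is then dominated by the quadratic part of the norm outside $\Lambda$, $\widetilde J_\eps$ has the Mountain Pass geometry \emph{and} satisfies the Palais--Smale condition, so I obtain a Mountain Pass critical point $(u_\eps,v_\eps)$ at a level $\widetilde c_\eps$; testing the Euler--Lagrange system against $(u_\eps^{-},0)$ and $(0,v_\eps^{-})$ and using $F(u,v)=F(|u|,|v|)$ gives $u_\eps,v_\eps\ge0$, and elliptic regularity plus the maximum principle give $u_\eps,v_\eps\in C^{1}$ with each component positive or identically zero. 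Writing $\Sigma(x)$ for the least-energy (Nehari) level of the autonomous system with the potentials frozen at $(V(x),W(x))$ and $\Sigma_*=\min_{x\in\overline{B(z,r)}}\Sigma(x)$, a test path built from a least-energy solution of the frozen problem, cut off and dilated by $\eps$ about a point realizing $\Sigma_*$, gives $\limsup_{\eps\to0}\eps^{-3}\widetilde c_\eps\le\Sigma_*$.

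Next I let $x_\eps$ be a global maximum point of $w_\eps:=u_\eps+v_\eps$ and rescale, $\widehat u_\eps(y)=u_\eps(x_\eps+\eps y)$, $\widehat v_\eps(y)=v_\eps(x_\eps+\eps y)$. The upper energy bound rules out vanishing: $\|\widehat u_\eps\|_\infty+\|\widehat v_\eps\|_\infty\ge\sigma_0>0$ uniformly, since otherwise a Moser iteration on the rescaled system would force $\widetilde c_\eps=o(\eps^3)$. Hence, along a subsequence, $x_\eps\to\bar x$ and $(\widehat u_\eps,\widehat v_\eps)\to(\widehat u,\widehat v)\ne(0,0)$ in $C^{1}_{\mathrm{loc}}$, a solution of the autonomous system with constant potentials $(V(\bar x),W(\bar x))$; a Brezis--Lieb/Fatou argument then shows $\Sigma(\bar x)\le\liminf_{\eps\to0}\eps^{-3}\widetilde c_\eps$, which together with the upper bound forces $\eps^{-3}\widetilde c_\eps\to\Sigma_*$, identifies $(\widehat u,\widehat v)$ with a least-energy solution of the frozen system, and makes $\bar x$ a minimizer of $\Sigma$ on $\overline{B(z,r)}$. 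The strict inequalities \eqref{minimoV}--\eqref{minimoW} together with the properties of $\Sigma$ recorded in Proposition \ref{lili} exclude $\bar x\in\pa B(z,r)$, so $x_\eps\in B(z,r)$ for $\eps$ small; and since $w_\eps(x_\eps+\eps\,\cdot)$ tends to a profile that is small at infinity, a comparison argument shows the penalized region is never entered by $(u_\eps,v_\eps)$ once $\eps$ is small, so $(u_\eps,v_\eps)$ solves $(S_\eps)$. Comparing $\Sigma_*$ with the semitrivial levels $\sqrt{V_0}\,c_0$ and $\sqrt{W_0}\,c_0$ ($c_0$ the least energy of the scalar problem $-\Delta U+U=U^{3}$ in $\R^3$) through the characterization of $\Sigma$ in Proposition \ref{lili} then yields the dichotomy \eqref{conveVW}.

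For the exponential bound, the genuine system satisfied by $(u_\eps,v_\eps)$ together with $w_\eps(x_\eps+\eps\,\cdot)\to0$ at infinity lets me construct, outside a fixed ball $B(x_\eps,\eps R)$, a supersolution $\mu_1e^{-\mu_2|x-x_\eps|/\eps}$ for the scalar differential inequality satisfied by $w_\eps$ (using $V,W\ge\alpha$ and that the nonlinear terms are there subquadratic in $w_\eps$), and a comparison principle gives $u_\eps+v_\eps\le\mu_1e^{-\mu_2|x-x_\eps|/\eps}$ on $\R^3$. Uniqueness of the global maximum of $w_\eps$ follows by contradiction: two maxima at mutual distance $\gg\eps$ would each contribute energy $\ge\Sigma_*-o(1)$, contradicting $\eps^{-3}\widetilde c_\eps\to\Sigma_*$, whereas two maxima at distance $O(\eps)$ would rescale to a least-energy solution of the autonomous system whose sum has two global maxima, against the known symmetry properties of such solutions. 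For part (ii) I invoke Proposition \ref{lili}: if $b<b_0$ the frozen least-energy level is attained only by semitrivial states, so one of $\widehat u,\widehat v$ is identically zero and, by uniform convergence and the decay just proved, $u_\eps(x_\eps)\to0$ (resp.\ $v_\eps(x_\eps)\to0$) while the surviving component --- which carries the maximum of $w_\eps$ --- stays $\ge\sigma$; if $b>b_1$ the frozen least-energy level lies strictly below both semitrivial ones, so it is attained only by states with both components positive, and passing to the limit yields $u_\eps(x_\eps),v_\eps(x_\eps)\ge\sigma>0$.

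The main obstacle is the step where energy loss under the $\eps$-rescaling is excluded and the limiting level is identified with $\Sigma_*$, because $\Sigma$ has no explicit formula; this is precisely where the fine analysis of the autonomous system packaged in Proposition \ref{lili} is indispensable, and it is also what produces the explicit thresholds $b_0,b_1$ in \eqref{b-def}--\eqref{defh1}. A related delicate point is the dichotomy \eqref{conveVW} when $V$ and $W$ attain their minima at different points of $B(z,r)$: there one must use the explicit structure of $\Sigma$, rather than mere monotonicity, to see that concentration still occurs at a minimum of one of the two potentials. Finally, the uniqueness of the global maximum of $u_\eps+v_\eps$ cannot be read off a one-line symmetry statement and must be obtained through the energy-splitting argument above.
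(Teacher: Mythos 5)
Your proposal follows the same overall strategy as the paper: del Pino--Felmer penalization, Mountain Pass critical point of the truncated functional $\tilde J_\eps$, an energy upper bound built from a cutoff of a frozen ground state, rescaling at the maximum point of $u_\eps+v_\eps$ to capture a nontrivial limit profile, and then Proposition \ref{lili} (the characterization of least energy solutions of the autonomous system in terms of $b$) to get the dichotomy in (ii), together with a barrier construction for the exponential decay and an energy-splitting argument for the uniqueness of the maximum.

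There is, however, one genuine gap in the chain of implications: the step where you deactivate the penalization. You write that ``since $w_\eps(x_\eps+\eps\,\cdot)$ tends to a profile that is small at infinity, a comparison argument shows the penalized region is never entered.'' But the $C^1_{\rm loc}$ convergence of the rescaled pair only controls $u_\eps+v_\eps$ on balls $B(x_\eps,\eps R)$ with $R$ fixed, whereas $\partial B(z,r)$ sits at distance $O(1)$ from $x_\eps$, which is $O(1/\eps)$ in the rescaled variable; local convergence gives you nothing there. The comparison/barrier argument that shows $(u_\eps-\delta)^+=(v_\eps-\delta)^+\equiv 0$ outside $B(z,r)$ needs, as input, that $u_\eps+v_\eps<\delta$ on $\partial B(z,r)$ itself. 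In the paper this is a separate and essential step (Proposition \ref{maximo}): one rescales about points $x_n\in\partial B(z,r)$, passes to the limit in the penalized problem, and compares the resulting level against $\Sigma(z)$ using \eqref{minimoV}--\eqref{minimoW} to reach a contradiction. Without this boundary rescaling argument (or an equivalent one), your comparison step is circular, because the exponential decay that would give smallness on $\partial B(z,r)$ is itself established only after the solution is known to solve the genuine system.

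Two smaller remarks. First, the paper's way of excluding vanishing is simpler than the Moser iteration you propose: at a local maximum $\xi_\eps$ of $u_\eps+v_\eps$ one has $-\Delta(u_\eps+v_\eps)(\xi_\eps)\ge 0$, and plugging the equations and $V,W\ge\alpha$ immediately gives $(u_\eps+v_\eps)(\xi_\eps)\ge\sigma>0$, with no energy lower bound needed; the Moser route additionally requires an $\eps^3$-scaled \emph{lower} bound on $\tilde c_\eps$, which is true but which you do not establish. Second, your justification of \eqref{conveVW} (``comparing $\Sigma_*$ with the semitrivial levels $\sqrt{V_0}\,c_0$ and $\sqrt{W_0}\,c_0$'') conflates (i) with (ii); the paper obtains \eqref{conveVW} directly by contradiction, assuming $V(x^*)>V_0$ \emph{and} $W(x^*)>W_0$ at the limit of the maximum points, which via the monotonicity of $c(V,W)$ yields $\Sigma(x^*)>\Sigma(z)$ and contradicts the upper energy bound; no knowledge of the semitrivial levels is needed at that stage.
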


\vskip2pt
\begin{remarks}
\label{d}
\begin{enumerate}
\item Actually, we can be more precise in conclusion (ii) of Theorem \ref{concentrazione}.
Indeed, if $V_0<W_0$ $u_\eps$ converges to zero while $v_\eps(x_\eps)$
remains bounded away from zero; otherwise if $W_0<V_0$ $u_\eps$ survives
and $v_\eps$ expires (see Remark \ref{survivor} for more details).
\item In the case $V=W$, there holds $b_0=b_1=1$;
then for $b<1$ $(u_\vep,v_\vep)$ converges (up to scalings) to the least
energy solution of one of the equations in $(S_\vep)$. While, for $b>1$
 $(u_\vep,v_\vep)$ converges to a real vector solution of the system
$(S_\vep)$.
\item The constants $b_{0}$ and $b_1$ depend only on the minima
$V_{0}, W_{0}$, so that $V$ and $W$ may have a degenerate minimum point
or a closed, connected bounded set of nonnegative measure of minimum points;
\item When considering the action of external forces in the propagation of
pulses in optical fibers, the potentials in the model problem are $V(x)$ and
$W(x)=V(x)+c$ with $c$ positive constant. In this case the result follows just
by assuming that \eqref{minimoV} holds.
\end{enumerate}
\end{remarks}
We can also prove a more general result than Theorem \ref{concentrazione}.
In order to do this, let us define the functional with frozen potentials $I_{z}:H^1\times H^1\to\R$
\be\label{defiz}
I_{z}(u,v)=\frac12\|u\|_{z}^2+\frac12\|v\|^{2}_{z}-\int_{\R^3} F(u,v),
\ee
where  $\|u\|^{2}_{z}=\|\nabla u\|^{2}_{2}+V(z)\|u\|^{2}_{2}$  for every $u\in
\hsob$. The critical points of $I_z$ are the solutions of  the system
\begin{equation}
\label{limit-z}
\tag{$S_z$}
\begin{cases}
-\Delta u + V(z)u = u^3+bv^2u & \text{in $\R^3$}, \\
\noalign{\vskip2pt}
-\Delta v + W(z)v = v^3+bu^2v & \text{in $\R^3$}.
\end{cases}
\end{equation}
The Nehari manifold  associated to $I_{z}$ is defined by
\be\label{defnehari}
{\mathcal N}_{z}=\left\{(u,v)\in \hsob\times\hsob\setminus\{(0,0)\}\,:\,\langle
I'_{z}(u,v),(u,v)\rangle=0\right\},
\ee
and the infimum of $I_z$ on ${\mathcal N}_{z}$
\be\label{defsigma}
\Sigma(z)=\inf_{{\mathcal N}_{z}}I_{z}\,.
\ee
Following the same argument of Lemma 3.1 in \cite{mmp}, it is possible to prove
that the Mountain Pass level of $I_{z}$ is equal to $\Sigma(z)$. In the
following we will denote with $(\varphi_{z},\psi_{z})\neq(0,0)$ the point where
$I_{z}$ achieves $\Sigma(z)$, that is $(\varphi_{z},\psi_{z})$ will be a least
energy solution of \rife{limit-z} (see \cite{bl} or \cite{mmp}, for example).

Because of this property, the function $\Sigma$ is known as the ground energy
function and plays an important role when studying necessary and sufficient
conditions for the concentration to occur, as the following result shows.

%%%%%%TEO ASTRATTO
\begin{theorem}\label{concentrazioneG}
Assume \eqref{positiveVW} and that there exist $z\in\Rn$ and $r>0$ such that
\begin{align}
\label{minimoS}
\Sigma_0=\min_{B(z,r)}\Sigma<\min_{\pa B(z,r)}\Sigma.
\end{align}
Then there exists $\eps_{0}>0$ such that, for every $0<\eps<\eps_{0}$,
problem $(S_{\eps})$ admits a nontrivial solution $(u_{\eps},v_{\eps})\in\H$,
$u_{\eps},\,v_{\eps}\geq0$, such that $(u_{\eps}+v_{\eps})$ admits exactly
one global maximum point $x_{\eps}\in B(z,r)$ with
\be\label{conveS}
\lim_{\eps\to 0} \Sigma(x_{\eps})=\Sigma_0,
\ee
and conclusions (i) and (ii) of Theorem \ref{concentrazione} hold true.
\end{theorem}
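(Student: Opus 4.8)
The plan is to run the penalization scheme already used for Theorem~\ref{concentrazione}, but anchored to the ground energy function $\Sigma$ rather than to the potentials $V,W$ separately. First I would record that $\Sigma$ is continuous on $\Rn$ (in fact locally Lipschitz, as announced in the introduction): this follows from the variational characterization of $\Sigma(z)$ as the Mountain Pass level of $I_z$ and from the continuity of $V,W$, which makes the quadratic forms $\|\cdot\|_z$ depend continuously on $z$. Granted this, by \eqref{minimoS} I would fix a bounded open set $\Lambda$ with smooth boundary such that
$$
\{x\in\overline{B(z,r)}:\Sigma(x)=\Sigma_0\}\subset\Lambda,\qquad
\overline\Lambda\subset B(z,r),\qquad
\Sigma_0<\min_{\overline{B(z,r)}\setminus\Lambda}\Sigma,
$$
so that $\Lambda$ plays the role of the ``trap'' in the del Pino--Felmer argument.

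Then I would introduce the penalized functional $J_\eps^\Lambda$, obtained from $J_\eps$ by cutting off the nonlinearity $F$ outside $\Lambda$ exactly as in the vectorial penalization introduced for Theorem~\ref{concentrazione} (a vectorial adaptation of \cite{dpf}; the homogeneity of $F$ is precisely what makes this admissible). The functional $J_\eps^\Lambda$ retains the Mountain Pass geometry and now satisfies the Palais--Smale condition, hence it possesses a nonnegative critical point $(u_\eps,v_\eps)$ at its Mountain Pass level $c_\eps$. Testing the min--max with a path obtained by rescaling by $\eps$, translating into $\Lambda$ and cutting off the least energy solution $(\varphi_{z_*},\psi_{z_*})$ of $(S_{z_*})$ at a point $z_*\in\Lambda$ with $\Sigma(z_*)=\Sigma_0$, one obtains the upper bound $c_\eps\le\eps^3(\Sigma_0+o(1))$ as $\eps\to0$.

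The heart of the proof, and the step I expect to be the main obstacle, is the concentration analysis. After checking that $\|u_\eps\|_\infty+\|v_\eps\|_\infty$ stays bounded away from zero and recalling the standard feature of the penalization that $u_\eps+v_\eps$ is uniformly small on $\R^3\setminus\Lambda$, one picks a maximum point $x_\eps$ of $u_\eps+v_\eps$, which then lies in $\Lambda$ for $\eps$ small, and rescales $(u_\eps(x_\eps+\eps y),v_\eps(x_\eps+\eps y))$. Since the penalization is inactive near $x_\eps$, elliptic estimates give, along a subsequence with $x_\eps\to z_0\in\overline\Lambda$, convergence of the rescaled pair to a nonnegative nontrivial solution $(\tilde u,\tilde v)$ of the autonomous system $(S_{z_0})$. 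Since $(\tilde u,\tilde v)$ is a nontrivial critical point one has $I_{z_0}(\tilde u,\tilde v)\ge\Sigma(z_0)$, whereas weak lower semicontinuity of the norms and tightness of the rescaled solutions give $I_{z_0}(\tilde u,\tilde v)\le\liminf_{\eps\to0}\eps^{-3}c_\eps\le\Sigma_0$; together with $\Sigma(z_0)\ge\Sigma_0$ (as $z_0\in\overline\Lambda$) this forces $\Sigma(z_0)=\Sigma_0$. A barrier/comparison argument then yields the uniform exponential decay $u_\eps(x)+v_\eps(x)\le\mu_1 e^{-\mu_2|x-x_\eps|/\eps}$, which makes the penalization globally inactive, so that $(u_\eps,v_\eps)$ actually solves $(S_\eps)$; this proves \eqref{conveS} and the decay estimate in (i). The uniqueness of the global maximum point of $u_\eps+v_\eps$ follows because $\tilde u+\tilde v$ inherits from the known structure of the least energy solutions of the autonomous system a unique, nondegenerate maximum, while a second global maximum of $u_\eps+v_\eps$ would carry an extra amount of energy bounded below by $\Sigma_0$, contradicting $c_\eps=\eps^3(\Sigma_0+o(1))$.

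Finally, conclusions (i) and (ii) in the precise form of Theorem~\ref{concentrazione} are read off from the blow-up limit at $z_0$. Setting $V_0:=V(z_0)$ and $W_0:=W(z_0)$ (legitimate, since $z_0$ is a minimum point of $\Sigma$ by the above), the very same computations involving the constants $b_0,b_1$ of \eqref{b-def}, together with Proposition~\ref{lili}, show that for $b<b_0$ one component of the rescaled solution expires while the other converges to a ground state of the corresponding scalar equation, and for $b>b_1$ both components survive and the rescaled pair converges to a vectorial least energy solution of $(S_{z_0})$. The genuinely new difficulty relative to Theorem~\ref{concentrazione} is that $\Sigma$ admits no explicit expression and the minimizer $(\varphi_z,\psi_z)$ need not be unique, so each of the energy estimates above must be carried out through the variational characterization of $\Sigma$ and semicontinuity rather than through a closed formula; this is where the argument requires care.
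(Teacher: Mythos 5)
Your proposal follows essentially the same del Pino--Felmer penalization scheme that the paper uses: penalize the nonlinearity outside a trap, bound the mountain-pass level by $\eps^{3}(\Sigma_0+o(1))$, obtain a contradiction at the boundary of the trap from $\Sigma_0<\min_{\partial B(z,r)}\Sigma$, then run the blow-up, maximum-principle decay, and Proposition~\ref{leastp} arguments exactly as in Theorem~\ref{concentrazione}. The paper's actual proof simply re-runs the proof of Theorem~\ref{concentrazione}, substituting the hypothesis \eqref{minimoS} for the inequality $\Sigma(z)<\Sigma(x_0)$ that was previously derived from $V(x_0)>V_0$, $W(x_0)>W_0$; your introduction of a nested trap $\Lambda\subset\subset B(z,r)$ and your explicit choice of a true minimizer $z_*$ of $\Sigma$ (rather than the center $z$) to anchor the competitor are minor bookkeeping variations, the latter being a small but welcome sharpening since $z$ itself need not realize $\Sigma_0$.
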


\begin{remark}
Theorem \ref{concentrazioneG} is more general than
Theorem \ref{concentrazione}. Indeed, conditions \eqref{minimoV}-\eqref{minimoW}
imply the desired information \eqref{minimoS} (see for the details the proof in Section
\ref{concentrazione}). However, Theorem \ref{concentrazioneG} is an abstract
result since we cannot write down explicitly the function $\Sigma$, due to the possible
lack of uniqueness of least energy solutions of $(\eqref{limit-z})$. It would
be interesting to see if, by assuming the $\Sigma$ admits a 'topologically
nontrivial' Clarke critical point, the concentration still pops up.
\end{remark}

Aiming to state a necessary condition for a family of solutions $(u_\eps,v_\eps)$
to concentrate around a  point $z$, we need a few preliminary definitions.

\begin{definition}
Let $z\in\R^3$ and let $b_{z}\geq 1$ be defined by
\beq\label{bzzeta}
b_z={\textstyle\max\left\{\sqrt[4]{\dfrac{W(z)}{V(z)}},
\sqrt[4]{\dfrac{V(z)}{W(z)}}\right\}}.
\eeq
For every $b>0$, we put
$$
\mathcal{O}_b=\left\{z\in\R^3:\,
b_{z}\geq b\right\}.
$$
\end{definition}

Next we define the concentration sets.

\begin{definition}\label{defe}
The concentration set for system~\eqref{problema}, $\mathcal{E}$, is defined by
\begin{align*}
\mathcal{E}=\Big\{& z\in\R^3:\, \text{there exists a sequence of
solutions $(u_{\eps},v_{\eps})\in\H$ of \eqref{problema} with} \\
& \text{$u_{\eps}(z+\eps x)+v_{\eps}(z+\eps x)\to 0$ as $|x|\to\infty$ uniformly with respect} \\
& \text{to $\eps$ and ${\eps}^{-3}J_{\eps}(u_{\eps},v_{\eps})\to\Sigma(z)$
as $\eps\to 0$} \Big\}.
\end{align*}
We  also introduce the subsets of $\mathcal{E}$
\begin{align*}
\mathcal{E}_V&:=\big\{ z\in\mathcal{E}\cap \mathcal{O}_b:\, \text{$u_{\eps}(z)\geq\delta$
for some $\delta>0$ and any $\eps>0$}\big\},    \\
\noalign{\vskip2pt}
\mathcal{E}_W&:=\big\{ z\in\mathcal{E}\cap \mathcal{O}_b:\, \text{$v_{\eps}(z)\geq\delta$
for some $\delta>0$ and any $\eps>0$}\big\},     \\
\noalign{\vskip4pt}
\mathcal{E}_\Sigma&:=\mathcal{E}\setminus\mathcal{O}_b.
\end{align*}
\end{definition}
In general the function $\Sigma$ is not known to be differentiable, but it is
always locally Lipschitz, as we will see. On the other hand, we need to consider
the critical points of $\Sigma$, so that we will use the
Clarke subdifferential (see \cite{clarke}), which is well defined for a
locally Lipschitz function. We will need the following definition.

\begin{definition}
For $V,\,W\in C^1(\R^3)$ and $\Sigma\in {\rm Lip}_{\rm loc}(\R^3)$ we denote by ${\rm Crit}(V)$ and ${\rm Crit}(W)$ the sets of the
critical points in $\mathcal{O}_b$ of $V$ and $W$ respectively, and
by ${\rm Crit}_C(\Sigma)$ the set of  $z\not\in \mathcal{O}_b$  critical points
of $\Sigma$  in the sense of Clarke subdifferential, that is:
\begin{align*}
{\rm Crit}(V)&=\big\{z\in\mathcal{O}_b:\,\nabla V(z)=0\big\}, \\
\noalign{\vskip2pt}
{\rm Crit}(W)&=\big\{z\in\mathcal{O}_b:\,\nabla W(z)=0\big\}, \\
\noalign{\vskip2pt}
{\rm Crit}_C(\Sigma)&=\big\{z\not\in\mathcal{O}_b:\,\partial_C\Sigma(z)\ni 0\big\},
\end{align*}
where
\begin{equation*}
\partial_C \Sigma(z)=\big\{\eta\in\R^3:\,\,
\Sigma^0(z,w)\geq \eta\cdot w,\,\,\,\text{for every $w\in\R^3$}\big\},
\end{equation*}
being $\Sigma^0(z,w)$ the generalized derivative of
$\Sigma$ at $z$ along $w\in\R^3$, defined by
\begin{equation*}
\Sigma^0(z;w)=\limsup_{\substack{\xi \to z
\\ \lambda \to 0^{+}}}\frac{\Sigma(\xi+\lambda w)-\Sigma(\xi)}{\lambda}.
\end{equation*}
\end{definition}

We can now state the following necessary condition.

%%%%%%TEO NECESSARIA
\begin{theorem}
\label{necessthm}
Assume \rife{positiveVW} and that $V,\,W\in C^1(\R^3)$ with
\begin{equation}
\label{growthVW}
|\nabla V(x)|\leq \beta e^{\gamma|x|}
\qquad
\text{and}
\qquad
|\nabla W(x)|\leq \beta e^{\gamma|x|},
\end{equation}\
for all $x\in\R^3$ and for some
constants $\beta>0$ and $\gamma\geq 0$.
Then $\Sigma$ is locally Lipschitz continuous and the following facts hold:
\vskip8pt
\noindent
{\rm (a)} $\mathcal{E}_V\cap\mathcal{E}_W\cap\{z\in\Rn:V(z)\neq W(z)\}=\emptyset$ and
$$
\mathcal{E}=\mathcal{E}_V\cup\mathcal{E}_W\cup\mathcal{E}_\Sigma,
$$
where
$$
\mathcal{E}_V\times \mathcal{E}_W\times \mathcal{E}_\Sigma
\subset {\rm Crit}(V)\times {\rm Crit}(W)\times {\rm Crit}_C(\Sigma).
$$
\vskip8pt
\noindent
{\rm (b)} If  $V,W\in L^\infty$, let $b_0^\infty<b_1^\infty<b^\infty_2$
be defined by
\begin{align}
\label{bstar1}
b_0^\infty &=
\max\Bigg\{\sqrt[4]{\dfrac{\alpha}{\|V\|_{\infty}}},
\sqrt[4]{\dfrac{\alpha}{\|W\|_\infty}}\Bigg\}, \\
\noalign{\vskip4pt}
\label{bstar2}
b_1^\infty &=\max\left\{\sqrt[4]{{\dfrac{\|V\|_\infty}{\alpha}}},
\sqrt[4]{{\dfrac{\|W\|_\infty}{\alpha}}}\right\}, \\
\noalign{\vskip2pt}
\label{bstar3}
b_2^\infty &=\max\left\{h\left(\sqrt{{\dfrac{\|V\|_\infty}{\alpha}}}\right),
h\left(\sqrt{{\dfrac{\|W\|_\infty}{\alpha}}}\right)\right\},
\end{align}
where $h$ is defined in \eqref{defh1}. Then
\begin{equation*}
\mathcal{E}=
\begin{cases}
\mathcal{E}_V\cup\mathcal{E}_W  & \text{for all $b\leq b_0^\infty$}, \\
\noalign{\vskip2pt}
\mathcal{E}_\Sigma & \text{for all $b>b_1^\infty$}.
\end{cases}
\end{equation*}
In addition, for every $b>b_2^\infty$ both the components of the solution
remain bounded away from zero from below.
\end{theorem}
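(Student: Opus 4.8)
\emph{Strategy.} Following the scheme of \cite{secsqu} adapted to the vectorial setting, the plan is to combine a blow-up analysis of the concentrating families with a Pohozaev--Derrick type identity, and to read off the Clarke critical point condition from the observation that the concentration point is a local maximum of $\Sigma$ minus a smooth tangent majorant. To see that $\Sigma\in{\rm Lip}_{\rm loc}(\Rn)$, one uses the $4$-homogeneity of $F$: for every $(u,v)\in\hh$ with $\int_{\Rn}F(u,v)>0$ there is a unique $t>0$ with $t(u,v)\in\mathcal{N}_z$, and a direct computation gives
$$
\Sigma(z)=\inf\Big\{Q_z(u,v):(u,v)\in\hh,\ \int_{\Rn}F(u,v)>0\Big\},\qquad
Q_z(u,v)=\frac{\big(\|u\|_z^2+\|v\|_z^2\big)^2}{16\int_{\Rn}F(u,v)},
$$
together with $\Sigma(z)=\int_{\Rn}F(\vfi_z,\psi_z)$ at any least energy solution of \eqref{limit-z}. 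On a compact $K$, $\a\le V,W\le\sup_K(V+W)$ forces $0<c_1(K)\le\Sigma(z)\le c_2(K)$, hence, using the Nehari identity $\|\vfi_z\|_z^2+\|\psi_z\|_z^2=4\int_{\Rn}F(\vfi_z,\psi_z)$ and $V,W\ge\a$, the uniform bounds $\|\vfi_z\|_2^2,\|\psi_z\|_2^2\le 4c_2(K)/\a$; since $\|u\|_z^2=\|\grad u\|_2^2+V(z)\|u\|_2^2$ is affine in $V(z)$ and $V,W\in C^1$ are locally Lipschitz, inserting $(\vfi_z,\psi_z)$ into $Q_{z'}$ yields $\Sigma(z')\le\Sigma(z)+C(K)|z-z'|$ for $z,z'\in K$ close, and the claim follows by symmetry.

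\emph{Momentum identity and blow-up.} A nonnegative solution $(u_\eps,v_\eps)\in\H$ of \eqref{problema} belongs to $H^2\times H^2$ and decays exponentially; testing the two equations with $\de_j u_\eps$ and $\de_j v_\eps$, integrating by parts and using $\int_{\Rn}\de_j F(u_\eps,v_\eps)=0$ and $\int_{\Rn}\de_j(|\grad u_\eps|^2)=0$, everything cancels except $\int_{\Rn}\de_j V\,u_\eps^2+\int_{\Rn}\de_j W\,v_\eps^2=0$ for $j=1,2,3$. Now fix $z\in\mathcal{E}$ and a sequence $(u_\eps,v_\eps)$ as in Definition~\ref{defe}, and set $\tilde u_\eps(y)=u_\eps(z+\eps y)$, $\tilde v_\eps(y)=v_\eps(z+\eps y)$; then $(\tilde u_\eps,\tilde v_\eps)$ solves the system with potentials $V(z+\eps\cdot),W(z+\eps\cdot)$ and $\eps^{-3}J_\eps(u_\eps,v_\eps)=G_\eps(\tilde u_\eps,\tilde v_\eps)$, where $G_\eps$ is $I_z$ with $V,W$ replaced by $V(z+\eps\cdot),W(z+\eps\cdot)$. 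The energy bound makes $(\tilde u_\eps,\tilde v_\eps)$ bounded in $\hh$; the uniform decay $\tilde u_\eps+\tilde v_\eps\to0$ at infinity, together with a comparison argument on the region $\{\tilde u_\eps^2+b\tilde v_\eps^2\le\a/2\}$, provides a uniform $L^\infty$ bound and uniform exponential decay $\tilde u_\eps(y)+\tilde v_\eps(y)\le Ce^{-c|y|}$; tightness gives strong convergence in $\elle{p}$ for $2\le p<6$ to a solution $(\vfi_z,\psi_z)$ of \eqref{limit-z}, and since $G_\eps(\tilde u_\eps,\tilde v_\eps)\to\Sigma(z)>0$ is the least energy level, the limit cannot be trivial, lies on $\mathcal{N}_z$, and (comparing energies via weak lower semicontinuity) is a least energy solution, with $\tilde u_\eps\to\vfi_z$, $\tilde v_\eps\to\psi_z$ strongly in $\huno$. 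Rescaling the momentum identity gives $\int_{\Rn}\de_j V(z+\eps y)\tilde u_\eps^2+\de_j W(z+\eps y)\tilde v_\eps^2=0$; by \eqref{growthVW} one has $|\de_j V(z+\eps y)|\,\tilde u_\eps^2\le\beta e^{\gamma|z|}e^{\gamma\eps|y|}\,\tilde u_\eps^2$, which for $\eps$ small is dominated by a fixed $L^1$ function, so dominated convergence (and its analogue for $W$) yields
$$
\grad V(z)\,\|\vfi_z\|_2^2+\grad W(z)\,\|\psi_z\|_2^2=0.
$$

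\emph{Proof of (a).} If $z\in\mathcal{O}_b$, proposition~\ref{lili} forces $(\vfi_z,\psi_z)$ to be semitrivial and nontrivial. If $z\in\mathcal{E}_V$, then $u_\eps(z)=\tilde u_\eps(0)\ge\delta$ gives $\vfi_z(0)\ge\delta>0$, so $\psi_z=0$, and the last identity reads $\grad V(z)\|\vfi_z\|_2^2=0$, i.e. $\grad V(z)=0$ and $z\in{\rm Crit}(V)$; symmetrically $\mathcal{E}_W\subset{\rm Crit}(W)$. Denoting by $m_0$ the least energy of $-\Delta w+w=w^3$ on $\Rn$, a semitrivial least energy solution of \eqref{limit-z} has energy $m_0V(z)^{1/2}$ or $m_0W(z)^{1/2}$; since this equals $\Sigma(z)=m_0\min\{V(z)^{1/2},W(z)^{1/2}\}$, membership in $\mathcal{E}_V$ forces $V(z)\le W(z)$ and in $\mathcal{E}_W$ forces $W(z)\le V(z)$, whence $\mathcal{E}_V\cap\mathcal{E}_W\cap\{z:V(z)\neq W(z)\}=\emptyset$. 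As the nonzero component of the nonnegative limit $(\vfi_z,\psi_z)$ is everywhere positive by the strong maximum principle and $\tilde u_\eps,\tilde v_\eps\to\vfi_z,\psi_z$ uniformly, the surviving component of $(u_\eps,v_\eps)$ at $z$ stays bounded below, so $\mathcal{E}\cap\mathcal{O}_b\subset\mathcal{E}_V\cup\mathcal{E}_W$, and with $\mathcal{E}_\Sigma=\mathcal{E}\setminus\mathcal{O}_b$ this gives $\mathcal{E}=\mathcal{E}_V\cup\mathcal{E}_W\cup\mathcal{E}_\Sigma$. Finally, if $z\in\mathcal{E}_\Sigma=\mathcal{E}\setminus\mathcal{O}_b$, the function $q(\xi):=Q_\xi(\vfi_z,\psi_z)$ is of class $C^1$ (as $V,W\in C^1$), satisfies $\Sigma\le q$ on $\Rn$ with $\Sigma(z)=q(z)$, so $z$ is a local maximum of the locally Lipschitz function $\Sigma-q$; hence $0\in\partial_C(\Sigma-q)(z)=\partial_C\Sigma(z)-\grad q(z)$ by the Clarke calculus \cite{clarke}, so $\grad q(z)\in\partial_C\Sigma(z)$, and since $\grad q(z)=\tfrac12\big(\grad V(z)\|\vfi_z\|_2^2+\grad W(z)\|\psi_z\|_2^2\big)=0$ we conclude $0\in\partial_C\Sigma(z)$, i.e. $z\in{\rm Crit}_C(\Sigma)$. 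This establishes $\mathcal{E}_V\times\mathcal{E}_W\times\mathcal{E}_\Sigma\subset{\rm Crit}(V)\times{\rm Crit}(W)\times{\rm Crit}_C(\Sigma)$.

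\emph{Proof of (b); main difficulty.} If $V,W\in L^\infty$, then $\a\le V,W\le\|\cdot\|_\infty$ gives $b_0^\infty\le b_z\le b_1^\infty$ for all $z$; hence $b\le b_0^\infty$ implies $\mathcal{O}_b=\Rn$, so $\mathcal{E}_\Sigma=\emptyset$ and $\mathcal{E}=\mathcal{E}_V\cup\mathcal{E}_W$, while $b>b_1^\infty$ implies $\mathcal{O}_b=\emptyset$, so $\mathcal{E}=\mathcal{E}\setminus\mathcal{O}_b=\mathcal{E}_\Sigma$. Using that $h$ (see \eqref{defh1}) is increasing on $[1,\infty)$ with $h(1)=1$ and $h\le1$ on $(0,1]$, and that $\sqrt{W(z)/V(z)}\le\sqrt{\|W\|_\infty/\a}$, $\sqrt{V(z)/W(z)}\le\sqrt{\|V\|_\infty/\a}$, one gets $\max\{h(\sqrt{W(z)/V(z)}),h(\sqrt{V(z)/W(z)})\}\le b_2^\infty<b$ for every $z$ whenever $b>b_2^\infty$; so by the characterization of least energy solutions of the autonomous system (Theorem~\ref{concentrazione}(ii)) the level $\Sigma(z)$ lies strictly below $m_0\min\{V(z)^{1/2},W(z)^{1/2}\}$ and is realized only by vectorial solutions, hence $(\vfi_z,\psi_z)$ is vectorial with both components strictly positive and $u_\eps(z)\to\vfi_z(0)>0$, $v_\eps(z)\to\psi_z(0)>0$. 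The delicate part is the blow-up step --- the uniform $L^\infty$ and exponential decay bounds for $(\tilde u_\eps,\tilde v_\eps)$ and the identification of $(\vfi_z,\psi_z)$ as a least energy solution of \eqref{limit-z} --- together with the limit in the momentum identity, where \eqref{growthVW} is exactly the hypothesis needed to dominate $|\grad V(z+\eps y)|\,\tilde u_\eps^2$ uniformly in $\eps$, the factor $e^{\gamma\eps|y|}$ being harmless only because $\eps\to0$; by contrast the Clarke subdifferential conclusion is immediate once $z$ is recognized as a local maximum of $\Sigma$ minus its smooth tangent majorant $q$.
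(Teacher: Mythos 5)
Your proposal is correct, and it departs from the paper's argument in two places in a genuinely different but equivalent way. First, for the local Lipschitz continuity of $\Sigma$: the paper first proves continuity of $z\mapsto c(V(z),W(z))$ by a Rabinowitz-type two-sided limit argument using the Nehari projection $\theta$, and then establishes Lipschitz regularity via the auxiliary function $h(\xi)=I_\xi(\theta(z,\xi)(\vfi_z,\psi_z))$ and a mean-value bound on $\nabla h$. You instead write $\Sigma(z)=\inf Q_z$ with the scale-invariant Rayleigh quotient $Q_z(u,v)=\|(u,v)\|_z^4/(16\int F)$; since a short computation shows $Q_\xi(\vfi_z,\psi_z)=I_\xi(\theta(z,\xi)(\vfi_z,\psi_z))$, these are in fact the same object, but the explicit quotient formula makes the affine dependence on $(V(\xi),W(\xi))$ transparent and collapses the paper's continuity-then-Lipschitz two-step into one inequality, using only uniform bounds on $\|\vfi_z\|_2^2$, $\|\psi_z\|_2^2$ and the lower bound on $\Sigma$ over a compact. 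Second, for $\mathcal{E}_\Sigma\subset{\rm Crit}_C(\Sigma)$: the paper computes the one-sided directional derivatives $(\partial\Sigma/\partial\eta)^\pm(z)$ explicitly (formulas \eqref{form-dder}) and then compares the generalized derivative of $-\Sigma$ to the Dini right derivative; you avoid this by exhibiting $q(\xi)=Q_\xi(\vfi_z,\psi_z)$ as a $C^1$ tangent majorant of $\Sigma$ at $z$, so that $z$ is a local maximum of $\Sigma-q$ and Clarke's local-extremum criterion together with the sum rule immediately gives $\nabla q(z)\in\partial_C\Sigma(z)$; the momentum identity (which you derive directly by testing with $\partial_j u_\eps,\partial_j v_\eps$, rather than via the Pucci--Serrin identity with cutoffs as the paper does) then gives $\nabla q(z)=0$. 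Your route is arguably cleaner, at the cost of not producing the explicit formulas \eqref{form-dder} for the one-sided derivatives that the paper records (and uses later in a remark); the steps you defer to standard theory --- uniform exponential decay of the scaled solutions, strong $H^1$ convergence to a least energy solution, the $L^1$ domination needed for passing to the limit in the momentum identity --- are stated at the right level and are essentially the same as in the paper. The remaining pieces, the exclusion $\mathcal{E}_V\cap\mathcal{E}_W\cap\{V\neq W\}=\emptyset$ via comparing $m_0 V(z)^{1/2}$ with $m_0 W(z)^{1/2}$, and part (b) via $b_0^\infty\le b_z\le b_1^\infty$ and the $h$-threshold, coincide with the paper's argument.
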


\begin{remark}
\label{sottodiff}
As $\mathcal{E}_\Sigma\subset {\rm Crit}_C(\Sigma)$, in particular,
for $z\in\mathcal{E}_\Sigma$, it holds
$$
0\in {\rm Co}\Big\{\lim_{j\to\infty}\nabla\Sigma(\xi_j):
\,\,\text{$\xi_j\not\in{\mathcal D}$ and $\xi_j\to z$}\Big\},
$$
where ${\rm Co}$ denotes the convex hull and ${\mathcal D}$ is any
null set containing the set of points at which $\Sigma$ fails to be
differentiable. This follows by a well known
property of the Clarke subdifferential (see e.g.\ \cite{clarke}).
\end{remark}
\begin{remark}
Assume for a moment that system \eqref{limit-z} admits a unique ground state solution, up
to translations. Then, in light of formulas \eqref{form-dder} it follows
that $\Sigma$ is differentiable at $z$, $\partial_C\Sigma(z)=\{\nabla\Sigma(z)\}$
and hence, $\nabla\Sigma(z)=0$ provided that $z\in\mathcal{E}_\Sigma$. On the
other hand, we point out that, in general, \eqref{limit-z}  lacks of uniqueness
of ground state solutions. For instance, if $b=V(z)=W(z)=1$ and $U$ is
the unique solution to $-\Delta U+U=U^{3}$ in $\Rn$,
then the pairs $(\cos(\t) U,\sin(\t) U)$ with $0\leq\t\leq\pi/2$
are all ground states solutions. In the case $b<1$, by the results of
\cite{mmp} the system has at least the scalar least energy
solutions $(0,U)$ and $(U,0)$. In the case $b>1$, we suspect
that the system admits a unique ground state solution.
On the other hand, up to now, a proof seems out of reach.
\end{remark}

%%%%%%%%%%%%%%%%%
%%%%PROVA CONCENTRAZIONE

\section{Proof of Theorem \ref{concentrazione}}
\label{suff-ps}

We will follow the arguments used in \cite{dpf} for the single equation.
Let $\gamma>0$ be such that
\beq
\label{kg}
\gamma<\dfrac{\alpha}{3\sqrt{\max\{1,b\}}}.
\eeq
For any $s,t\in\R$, let us set
\bdm
F_\sharp(s,t)=\left\{\begin{array}{ll}
\displaystyle\frac14\left(s^{4}+2bs^2t^{2}+t^{4}\right)
& \text{if $s^{4}+2bs^2t^{2}+t^{4}\leq\g^{2}$},
\\ \\
\displaystyle\frac\gamma2\sqrt{s^{4}+2bs^2t^{2}+t^{4}}-\frac{\gamma^2}{4}
& \text{if $s^{4}+2bs^2t^{2}+t^{4}\geq\g^{2}$};
\end{array}\right.
\edm
it follows that
\bdm
\grad F_\sharp(s,t)=\left\{\begin{array}{ll}
\left((s^{2}+bt^{2})s,(t^{2}+bs^{2})t\right)\quad
& \text{if $s^{4}+2bs^2t^{2}+t^{4}\leq\g^{2}$},
\\ \\
\gamma\dfrac{\left((s^{2}+bt^{2})s,(t^{2}+bs^{2})t\right)}
{\sqrt{s^{4}+2bs^2t^{2}+t^{4}}}\quad & \text{if
$s^{4}+2bs^2t^{2}+t^{4}\geq\g^{2}$}.
\end{array}\right.
\edm
It is easy to see that $F_\sharp\in C^{1}(\R^2)$.
Let $B(z,r)$ a ball of radius $r$ centered in $z$ with $z$ satisfying
conditions \eqref{minimoV}-\eqref{minimoW}; we define
\bdm
G(x,s,t)=\chi(x) F(s,t)+(1-\chi(x))F_\sharp(s,t),
\edm
for a.e. $x\in\R^3$ and any $s,t\in\R$, where $\chi$ is the
characteristic function of the ball $B(z,r)$.
In the light of the above definition, it follows that the following conditions
hold for every $(s,t)$ in $\R^2$
\begin{equation}\label{g11}
0\leq 3G(x,s,t)<\grad G(x,s,t)\cdot(s,t)\quad \forall\,x\in B(z,r),\,
\end{equation}
and, for every $x\not\in\overline{B(z,r)}$,
\begin{equation}\label{g12}
0\leq 2G(x,s,t)\leq \grad G(x,s,t)\cdot(s,t)\leq \dfrac{1}{k}\left[
V(x)s^{2}+W(x)t^{2} \right]
\quad \text{with }k>3.
\eeq
We study the following functional
\bdm
\tilde{J}_{\eps}(u,v)=\frac12\|u\|_{\eps,V}^{2}
+\frac12\|v\|_{\eps,W}^{2}-\int_{\R^3}G(x,u,v).
\edm
Note that $\tilde{J}_{\eps} $ is of class $C^1$ on $\H$ and its critical
points solve the system
\begin{equation}
\label{eqg}
\begin{cases}
-\eps^2\Delta u + V(x)u = G_u(x,u,v) & \text{in $\R^3$}, \\
\noalign{\vskip2pt}
-\eps^2\Delta v + W(x)v = G_v(x,u,v) & \text{in $\R^3$}.
\end{cases}
\end{equation}
For each $\eps>0$ fixed, we will find a critical point of $\tilde{J}_{\eps}$
by applying the Mountain Pass theorem (\cite{ambrab}), so that we define
\begin{equation}\label{defc}
c_\eps=\inf_{\ga\in\Ga}\sup_{t\in[0,1]}\tilde{J}_{\eps}(\ga(t)),
\end{equation}
where $\Ga=\{\ga\in C([0,1],\H):\ga(0)=(0,0),\,\, \tilde{J}_{\eps}(\ga(1))<0\}$.
Arguing as in Lemma 2.1 in  \cite{dpf} and as in Lemma 3.2 in \cite{mmp}
it is possible to prove that
\beq\label{livelli}
c_\eps=\inf_{(u,v)\in \H\setminus \{(0,0)\}}\sup_{t\geq0}\tilde{J}_{\eps}(tu,tv).
\eeq
Moreover, we will compare $c_\eps$ with the level $\Sigma(z)$ (defined in \eqref{defsigma})
of a ground state solution $(\vfi_{z},\psi_{z})$ of the limit system \rife{limit-z}.
It is well known (see e.g. \cite{busi}, \cite{mmp}) that the functions $\vfi_{z},\psi_{z}$
are radially symmetric, nonnegative functions which decay exponentially to zero
at infinity.

First of all, we show that $\tilde{J}_{\eps}$ possesses suitably
estimated critical values.

%%%%%Lemma ESISTENZA (u_\eps,v_\eps).
\ble\label{exx}
Assume $\eqref{positiveVW}$. Then $\tilde{J}_{\eps}$ has a nontrivial critical point
$(u_{\eps},v_{\eps})\in\H$ such that
\beq\label{energia}
\tilde{J}_{\eps}(u_{\eps},v_{\eps})\leq \eps^{3} (\Sigma(z)+o(1)),
\eeq
where $o(1)\to 0$ as $\eps\to 0$.
Moreover, there exists a positive
constant $c_0$ such that
\beq
\label{stimato}
\|u_\eps\|_{\eps,V}^{2}+\|v_\eps\|_{\eps,W}^{2}\leq c_0\eps^{3}.
\eeq
\ele

\begin{proof}
Note that $(0,0)$ is a local minimum of the functional $\tilde{J}_{\eps}$, since
it holds $\tilde{J}_{\eps}(u,v)\geq c\|(u,v)\|^{2}_{\H}$,
provided that the norm $\|(u,v)\|_{\H}$ is sufficiently small. Moreover,
let $(\phi,\psi)\in \H$ with ${\rm supp}(\phi)\cup {\rm supp}(\psi)\subset B(z,r)$
and observe that $\tilde{J}_{\eps}(t(\phi,\psi))\to-\infty$ as $t\to+\infty$.
Then we can construct a Palais-Smale sequence at level $c_{\eps}$ (defined in
\eqref{defc}). Conditions \eqref{g11} and \eqref{g12} imply that
hypothesis $(g3)$ in \cite{dpf} is satisfied in our context, so  that the
compactness of  Palais-Smale sequences can be recovered following the
proof of Lemma 1.1 in \cite{dpf}. By applying the Mountain Pass Theorem
(\cite{ambrab}),  we get a nontrivial critical point
$(u_\eps,v_\eps)$ at level $c_\eps$.
In order to show estimate \rife{energia}, we need to consider a suitable
pair of functions which models the concentration phenomenon.
Let us define the functions
\bdm
u^{*}(x)=\eta(x)\vfi_{z}\left(\frac{x-z}{\eps}\right)\qquad
v^{*}(x)=\eta(x)\psi_{z}\left(\frac{x-z}{\eps}\right),
\edm
where $\eta$ is a smooth function compactly supported in $B(z,r)$
and such that $\eta=1$ in a small neighborhood of $z$ and
$(\vfi_{z},\psi_{z})$ is a ground state solution of problem \eqref{limit-z}.
From the definitions of $G(x,s,t)$ and $\eta(x)$ we deduce that
$\tilde{J}_\eps(tu^*,tv^*)=J_\eps(tu^*,tv^*)$, so that it is easy to compute
the supremum for $t\geq 0$ of  $\tilde{J}_{\eps}(tu^*,tv^*)$ and by using
\eqref{livelli} we derive
\bdm
\tilde{J}_{\eps}(u_{\eps},v_{\eps})=c_\eps\leq\sup_{t\geq0}
\tilde{J}_{\eps}(t u^{*},t v^{*}) =\eps^{3}\left[\Sigma(z)+o(1)\right],
\edm
that is \eqref{energia} holds.
Finally, using \eqref{energia}, the weak form of \rife{eqg} tested with
$(u_{\eps},v_{\eps})$ and \eqref{g11}, \eqref{g12}, it is possible to get
also \eqref{stimato}.
\end{proof}

In the next proposition the asymptotic behavior outside $B(z,r)$
of the critical point  $(u_\eps,v_\eps)$ found in Lemma \ref{exx}
is studied.

\begin{proposition}\label{maximo}
Assume $\eqref{positiveVW}$ and that $z\in\R^3$ and $r>0$ satisfy conditions
$\eqref{minimoV}$ and $\eqref{minimoW}$. Then for every $\delta>0$
there exists $\eps_\delta>0$ such that
\begin{equation}\label{limepsfin}
\sup_{0<\eps<\eps_\delta}\,\sup_{x\in\Rn\setminus B(z,r)}(u_{\eps}(x)+v_{\eps}(x))<\delta.
\end{equation}
\end{proposition}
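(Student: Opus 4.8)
I would follow the penalization scheme of del Pino--Felmer \cite{dpf}, in three steps, preceded by two uniform a priori facts.

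\emph{Preliminary bounds.} Fix $x_0\in\Rn$ and rescale: $\tilde u(y)=u_\eps(x_0+\eps y)$, $\tilde v(y)=v_\eps(x_0+\eps y)$ solve \eqref{eqg} on the unit scale. The estimate \eqref{stimato} translates into a bound for $(\tilde u,\tilde v)$ in $H^1\times H^1$ that is uniform in $\eps$ and in $x_0$; since the coupling is cubic, hence subcritical in dimension three, a Brezis--Kato / Moser iteration upgrades this to a uniform $L^\infty$ bound, and interior elliptic estimates to a uniform $C^{1,\alpha}_{\mathrm{loc}}$ bound for the rescaled pairs. Next, put $\Psi_\eps:=u_\eps^2+v_\eps^2$. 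On the open set $\Rn\setminus\overline{B(z,r)}$ one has $G=F_\sharp$; multiplying the first equation in \eqref{eqg} by $u_\eps$, the second by $v_\eps$, adding, and using $u_\eps\Delta u_\eps=\tfrac12\Delta(u_\eps^2)-|\nabla u_\eps|^2$ together with \eqref{g12}, \eqref{positiveVW}, \eqref{kg}, one gets
\[
-\eps^2\Delta\Psi_\eps+c(\alpha)\,\Psi_\eps\le -2\eps^2\big(|\nabla u_\eps|^2+|\nabla v_\eps|^2\big)\le0\qquad\text{in }\Rn\setminus\overline{B(z,r)},
\]
for some $c(\alpha)>0$. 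Thus $\Psi_\eps$ is a nonnegative subsolution of $-\eps^2\Delta+c(\alpha)$ vanishing at infinity, so by the maximum principle $\sup_{\Rn\setminus B(z,r)}\Psi_\eps=\max_{\partial B(z,r)}\Psi_\eps$; comparison with the supersolution $M\exp\!\big(-\mu(|x-z|-r)/\eps\big)$, $M=\max_{\partial B(z,r)}\Psi_\eps$ and $\mu$ small, moreover yields the exponential decay stated in conclusion (i) of Theorem \ref{concentrazione}. Since $(u_\eps+v_\eps)^2\le2\Psi_\eps$, it now suffices to prove $\max_{\partial B(z,r)}\Psi_\eps\to0$ as $\eps\to0$.

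\emph{Blow-up.} Suppose not: there are $\delta_0>0$, $\eps_n\to0$ and $x_n\in\partial B(z,r)$ with $\Psi_{\eps_n}(x_n)\ge\delta_0$. Rescaling around $x_n$, the preliminary bounds give, up to a subsequence, $x_n\to\bar x\in\partial B(z,r)$ and convergence of the rescaled pairs in $C^1_{\mathrm{loc}}$ to $(\tilde u,\tilde v)$, which is nonnegative, lies in $H^1\times H^1$ by \eqref{stimato}, and satisfies $\tilde u(0)+\tilde v(0)>0$. Because $x_n\in\partial B(z,r)$, the rescaled balls $\{y:x_n+\eps_n y\in B(z,r)\}$ converge locally to an open half-space $P$ with $0\in\partial P$, so $(\tilde u,\tilde v)$ is a nontrivial critical point of the ``patched'' frozen functional
\[
\mathcal I(u,v)=\tfrac12\|\nabla u\|_2^2+\tfrac12 V(\bar x)\|u\|_2^2+\tfrac12\|\nabla v\|_2^2+\tfrac12 W(\bar x)\|v\|_2^2-\int_P F(u,v)-\int_{\Rn\setminus\overline P}F_\sharp(u,v).
\]

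\emph{Energy comparison.} Since $F_\sharp\le F$ pointwise, $\mathcal I\ge I_{\bar x}$, whence the mountain-pass level of $\mathcal I$ is at least that of $I_{\bar x}$, which equals $\Sigma(\bar x)$; and since $F_\sharp$ still satisfies $2F_\sharp\le\nabla F_\sharp\cdot(\cdot)$ while $F$ is $4$-homogeneous, the standard fibering argument shows that the least nontrivial critical value of $\mathcal I$ equals its mountain-pass level, so $\mathcal I(\tilde u,\tilde v)\ge\Sigma(\bar x)$. On the other hand, rewriting $\eps_n^{-3}\tilde J_{\eps_n}(u_{\eps_n},v_{\eps_n})$ through the Nehari identity and rescaling around $x_n$, a Brezis--Lieb / profile decomposition of the critical point $(u_{\eps_n},v_{\eps_n})$ (bubbles carry nonnegative energy, the remainder none) gives $\liminf_n\eps_n^{-3}\tilde J_{\eps_n}(u_{\eps_n},v_{\eps_n})\ge\mathcal I(\tilde u,\tilde v)\ge\Sigma(\bar x)\ge\min_{\partial B(z,r)}\Sigma$. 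But \eqref{energia}, sharpened by centering the competitor pair of Lemma \ref{exx} at a minimum point of $\Sigma$ in $B(z,r)$ instead of at $z$, gives $\eps_n^{-3}\tilde J_{\eps_n}(u_{\eps_n},v_{\eps_n})\le\min_{B(z,r)}\Sigma+o(1)$, while (as noted after Theorem \ref{concentrazioneG}) hypotheses \eqref{minimoV}--\eqref{minimoW} force $\min_{B(z,r)}\Sigma<\min_{\partial B(z,r)}\Sigma$ — a contradiction, which proves \eqref{limepsfin}. (If instead one blows up at an arbitrary $x_n\notin B(z,r)$ with $\mathrm{dist}(x_n,B(z,r))/\eps_n\to\infty$ — a case eliminated by the reduction to $\partial B(z,r)$ above — the limit solves the fully penalized system on all of $\Rn$, and testing with cut-offs together with \eqref{g12} forces $(\tilde u,\tilde v)\equiv0$, against $\tilde u(0)+\tilde v(0)>0$.)

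The main obstacle is the last step: identifying the least critical value of the non-standard patched functional $\mathcal I$ — whose nonlinearity $F_\sharp$ is only asymptotically quadratic at infinity and whose domain is cut by a hyperplane — with its mountain-pass level, and carrying out the no-loss-of-energy (Brezis--Lieb / bubbling) argument that bounds $\liminf_n\eps_n^{-3}\tilde J_{\eps_n}(u_{\eps_n},v_{\eps_n})$ from below by $\mathcal I(\tilde u,\tilde v)$. Everything else is a routine transcription of \cite{dpf} to the system.
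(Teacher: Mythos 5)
Your proposal is correct and runs along the same contradiction/blow-up scheme the paper uses, but it swaps out both technical endpoints of the argument. For the reduction from $\Rn\setminus B(z,r)$ to $\partial B(z,r)$, you use the weak maximum principle for $\Psi_\eps=u_\eps^2+v_\eps^2$, which by \eqref{g12} and \eqref{kg} is a subsolution of $-\eps^2\Delta+c(\alpha)$ outside $\overline{B(z,r)}$; the paper instead tests \eqref{eqg} against $(u_\eps-\delta)^+\chi_{\{|x-z|>r\}}$ and $(v_\eps-\delta)^+\chi_{\{|x-z|>r\}}$ and uses the strict positivity of $\Upsilon_\eps,\Lambda_\eps\geq 2\alpha/3$. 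Both routes lean on the same penalization constant \eqref{kg} and give the same conclusion; your comparison-function version has the small dividend of producing an exponential bound away from $B(z,r)$ in passing (though note the decay asserted in Theorem \ref{concentrazione}(i) is centered at $x_\eps$, not at $\partial B(z,r)$, so it does not come for free from this step alone). In the blow-up, you take advantage of $x_n\in\partial B(z,r)$ to identify the rescaled characteristic function as $\chi_P$ for a half-space $P$, whereas the paper only extracts a weak$^*$ limit $\xi\in[0,1]$; this is immaterial because both only use $F_\sharp\le F$ in the end. For the no-loss-of-energy step you invoke a Brezis--Lieb/profile decomposition, whereas the paper (following Lemma 2.2 of \cite{dpf}) does a more elementary and self-contained splitting with a cut-off $\eta_R$ tested against \eqref{succ}; the latter is the path of least technology and avoids the need to justify a full bubbling decomposition for a critical point of the patched functional. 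Finally, your suggestion to center the competitor of Lemma \ref{exx} at a point where $\Sigma$ attains $\Sigma_0$ rather than literally at $z$ is a clean way to obtain the strict inequality $\Sigma_0<\Sigma(x_0)$ needed for the contradiction; the paper reaches the same inequality via the monotonicity $V(x_0)>V_0$, $W(x_0)>W_0\Rightarrow\Sigma(x_0)>\Sigma(z)$ in \eqref{cruc-step}, which presumes $V(z)=V_0$, $W(z)=W_0$, and is made uniform in Theorem \ref{concentrazioneG} exactly along the lines you sketch. You correctly identified the hard points: the fibering characterization of the least nontrivial critical value of the patched functional (which the paper packages into \eqref{maxiJ}--\eqref{diseq}) and the lower-semicontinuity of the rescaled energy.
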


\begin{proof}
Let us first prove that
\begin{equation}\label{limeps}
\lim_{\eps\to 0}\,\sup_{x\in\partial B(z,r)}(u_{\eps}(x)+v_{\eps}(x))=0.
\end{equation}
We proceed by contradiction, assuming that there exist
a sequence $\{\eps_{n}\}$ con\-ver\-ging to $0$ and a sequence $\{x_{n}\}\subset\pa B(z,r)$
such that, for some positive constant $\beta$,
\beq\label{contro}
u_{\eps_n}(x_{n})+v_{\eps_n}(x_{n}) \geq\b\qquad\text{for all $n\geq 1$.}
\eeq
Since $\pa B(z,r)$ is a compact set, we can assume that there exists a
subsequence of $\{x_{n}\}$, still denoted by $\{x_{n}\}$, which converges to a
point $x_{0}\in\pa B(z,r)$. Consider the scalings of $u_{\eps_{n}}$ and $v_{\eps_{n}}$
centered at $x_n$, that is
$$
\phi_{n}(x)=u_{\eps_{n}}(x_{n}+\eps_{n}x)\qquad
\psi_{n}(x)=v_{\eps_{n}}(x_{n}+\eps_{n}x),
$$
which are critical points of the functional $J_n$ defined in $\H$ by
\bdm\label{defJn}
\tilde J_{n}(u,v)=\frac{1}{2}\|u\|_{1,V(x_{n}+\eps_{n}x)}^{2}+
\frac{1}{2}\|v\|_{1,W(x_{n}+\eps_{n}x)}^{2}-\int_{\Rn}G(x_{n}+\eps_{n}x,u,v),
\edm
so that the couple $(\phi_n,\psi_n)$ solve the system
\beq\label{succ}
\begin{cases}
-\Delta\phi_{n}+V(x_{n}+\eps_{n}x)\phi_{n}=G_{u}(x_{n}+\eps_{n}x,\phi_{n},\psi_{n}), &
\\ \noalign{\vskip3pt}
-\Delta\psi_{n}+W(x_{n}+\eps_{n}x)\psi_{n}=G_{v}(x_{n}+\eps_{n}x,\phi_{n},\psi_{n}).
\end{cases}
\eeq
Notice that, by a simple change of scale, it is possible to verify that
\begin{equation}\label{change}
\tilde J_{n}(\phi_{n},\psi_{n})=\eps_{n}^{-3}\tilde{J}_{\eps_{n}}
(u_{\eps_{n}},v_{\eps_{n}}).
\end{equation}
From \eqref{stimato} we have that the sequences $\phi_n$ and $\psi_n$ are
bounded in $H^1$; this, \eqref{succ} and elliptic regularity estimates
imply that $\phi_n$ and $\psi_n$ converge $C^2$ on compact sets to a
couple $(\phi,\psi)\in\H$, which, by \eqref{contro} must be nontrivial.
In addition, there exists a function $\xi\in L^\infty$, with $0\leq \xi\leq1$ such
that $\chi(x_{n}+\eps_{n}x)$ converges to $\xi$ weakly* in $L^{\infty}$.
Then, the pair $(\phi,\psi)$ is a solution of
\bdm
\begin{cases}
-\Delta\phi+V(x_{0})\phi=\widehat G_{u}(x,\phi,\psi), &
\\
\noalign{\vskip3pt}
-\Delta\psi+W(x_{0})\psi=\widehat G_{v}(x,\phi,\psi), &
\end{cases}
\edm
where $\widehat G(x,s,t)=\xi(x) F(s,t)+(1-\xi(x))F_\sharp(s,t).$
The preceding system is the Euler equation of the functional
\bdm
J_{x_0}(u,v)=\frac{1}{2}\|u\|_{1,V(x_{0})}^{2}
+\frac{1}{2}\|v\|_{1,W(x_{0})}^{2}-\int_{\Rn} \widehat G(x,u,v).
\edm
On the other hand, conditions \eqref{g11} and \eqref{g12} allow us
to follow the same arguments of Lemma 2.2 in \cite{dpf}
to deduce that
\beq\label{claim}
\liminf_{n\to \infty} J_{n}(\phi_{n},\psi_{n}) \geq J_{x_0}(\phi,\psi).
\eeq
Indeed, consider the function
\begin{align*}
h_{n}=& \frac12\left[|\nabla \phi_{n}|^{2}+|\nabla\psi_{n}|^{2}
+V(x_{n}+\eps_{n}x)|\phi_{n}|^{2}+W(x_{n}+\eps_{n}x)|\psi_{n}|^{2}\right] \\
&- G(x_{n}+\eps_{n}x,\phi_{n},\psi_{n}).
\end{align*}
Choosing $R>0$ sufficiently large, from the $C^{1}$ convergence of
$\phi_{n},\,\psi_{n}$ over compacts, and since $\phi$ and $\psi$ belong
to $\hsob$ we have, for every $\delta>0$ fixed,
$$
\lim_{n\to\infty}\int_{B_{R}}h_{n}\geq J_{x_{0}}(\phi,\psi)-\delta,
$$
where $B_R$ stands for $B(0,R)$.
Moreover, taking $\eta_{R}$  a smooth cut-off function such that $\eta_{R}=
0$ on $B_{R-1}$ and $\eta_{R}= 1$ on $\RN\setminus B_{R}$, and using as
test function in \eqref{succ} $w=\eta_{R}(\phi_{n},\psi_{n})$, it is possible
to obtain
$$
\liminf_{n\to\infty}\int_{\RN\setminus B_{R}}h_{n}\geq -\delta,
$$
yielding \eqref{claim}.
Since $(\phi,\psi)$ is a critical point
of $J_{x_0}$ we have
\begin{equation}\label{maxiJ}
J_{x_0}(\phi,\psi)=\max_{t\geq0}J_{x_{0}}(t(\phi,\psi)).
\end{equation}
Moreover, it holds $F(s,t)\geq F_\sharp(s,t)$, so that $\widehat{G}(x,s,t)\leq F(s,t)$
which, together with \eqref{maxiJ}, implies that
\begin{equation}\label{diseq}
J_{x_0}(\phi,\psi)\geq \inf_{(u,v)\in \H}\sup_{t\geq0} I_{x_{0}}
(t(u,v))=\Sigma(x_{0}).
\end{equation}
From assumptions \rife{minimoV}, \rife{minimoW} it follows that
$V(x_{0})>V_0$ and $W(x_{0})>W_0$,
this means that $\Sigma(x_0)>\Sigma(z)$, where $\Sigma(z)$ is defined in
\eqref{defsigma}. This, \eqref{change}, \eqref{claim} and \eqref{diseq} yield
\begin{equation}
\label{cruc-step}
\Sigma(z)<J_{x_0}(\phi,\psi)\leq  \liminf_{n\to\infty}J_{n}(\phi_{n},\psi_{n})\leq \Sigma(z),
\end{equation}
which is a contradiction, proving \eqref{limeps}.

We are now ready to conclude the proof of the result. Let us fix $\delta>0$;
from \eqref{limeps} it follows that there exists $\eps_\delta>0$ such that
$0\leq u_{\eps}(x)<\delta$ and $0\leq v_{\eps}(x)<\delta$
for any $x\in\pa B(z,r)$ and $\eps\in(0,\eps_\delta)$.
It follows that $(u_{\eps}-\delta)^{+}= 0$ and $(v_{\eps}-\delta)^{+}= 0$ on
$\pa B(z,r)$ and hence we can choose
$$
\phi_\eps=(u_{\eps}-\delta)^{+}\chi_{\{|x-z|>r\}}\in H^1,
\qquad
\psi_\eps=(v_{\eps}-\delta)^{+}\chi_{\{|x-z|>r\}}\in H^1,
$$
as test functions for system \rife{eqg}. By multiplying and integrating
over $\Rn$, we obtain
\begin{align*}
&\int_{\R^{3}\diff B(z,r)}\left(\eps^{2}|\grad
(u_{\eps}-\delta)^{+}|^{2}+V(x)u_{\eps}(u_{\eps}-\delta)^{+}
-G_{u}(x,u_{\eps},v_{\eps})(u_{\eps}-\delta)^{+}\right)\\
&+\int_{\R^{3}\diff B(z,r)}\left(\eps^{2}|\grad
(v_{\eps}-\delta)^{+}|^{2}+W(x)v_{\eps}(v_{\eps}-\delta)^{+}
-G_{v}(x,u_{\eps},v_{\eps})(v_{\eps}-\delta)^{+}\right)=0.
\end{align*}
Note that, since we can write
\bdm
G_{u}(x,u_{\eps},v_{\eps})= \begin{cases}
\dfrac{G_{u}(x,u_{\eps},v_{\eps})}{u_{\eps}}\left[(u_{\eps}-\delta)+\delta\right]
&\text{if $u_\eps(x)>0$}, \\
0 &\text{if $u_\eps(x)=0$},
\end{cases}
\edm
and
\bdm
G_{v}(x,u_{\eps},v_{\eps})= \begin{cases}
\dfrac{G_{v}(x,u_{\eps},v_{\eps})}{v_{\eps}}\left[(v_{\eps}-\delta)+\delta\right]
&\text{if $v_\eps(x)>0$}, \\
0 &\text{if $v_\eps(x)=0$},
\end{cases}
\edm
the preceding identity turns into
\begin{align*}
& \int_{\R^{3}\diff B(z)}\big(\eps^{2}|\grad (u_{\eps}-\delta)^{+}|^{2}
+\Upsilon_\eps(x)|(u_{\eps}-\delta)^{+}|^{2}+\Upsilon_\eps(x)\delta(u_{\eps}-\delta)^{+}\big) \\
&+\int_{\R^{3}\diff B(z)}\big(\eps^{2}|\grad (v_{\eps}-\delta)^{+}|^{2}
+\Lambda_\eps(x)|(v_{\eps}-\delta)^{+}|^{2}+\Lambda_\eps(x)\delta(v_{\eps}-\delta)^{+}\big)=0,
\end{align*}
where we have set
\bdm
\Upsilon_\eps(x)= V(x)-\gamma\dfrac{u_{\eps}^{2}(x)+b v_{\eps}^{2}(x)}
{\sqrt{u_{\eps}^{4}(x)+2bu_{\eps}^{2}(x)v_{\eps}^{2}(x)+v_{\eps}^{4}(x)}}
\edm
and
\bdm
\Lambda_\eps(x)= W(x)-\gamma\dfrac{v_{\eps}^{2}(x)+b u_{\eps}^{2}(x)}
{\sqrt{u_{\eps}^{4}(x)+2bu_{\eps}^{2}(x)v_{\eps}^{2}(x)+v_{\eps}^{4}(x)}}.
\edm
By virtue of \rife{kg}, it is easy to show that $\Upsilon_\eps(x)\geq 2\alpha/3$
and $\Lambda_\eps(x)\geq 2\alpha/3$ for all $x$ with $u_\eps(x)>0$ or
$v_\eps(x)>0$, which implies that $(u_{\eps}(x)-\delta)^{+}=0$ and
$(u_{\eps}(x)-\delta)^{+}=0$ for every $x\not\in B(z,r)$ and every
$0<\eps<\eps_\delta$, namely the assertion.
\end{proof}

When proving Theorem \ref{concentrazione} we will use Theorem 2.9
in \cite{mmp} which gives a ne\-ces\-sary condition for the existence of vector
ground state (that is a ground state $(u,v)$ with $u>0$ and $v>0$) for an
autonomous system. Here, for the reader convenience, we briefly sketch the proof in the presence of a two different constant potentials.

\begin{proposition}\label{leastp}
Let $\kappa_1,\kappa_2>0$ and  $(u,v)\in\hh$ be a least energy solution of the system
\beq\label{lili}
\begin{cases}
-\Delta u + \kappa_{1} u = u^3+bv^2u & \text{in $\R^3$}, \\
\noalign{\vskip2pt}
-\Delta v + \kappa_{2} v =v^3+bu^2v & \text{in $\R^3$}.
\end{cases}
\eeq
Let $b_0$ and $b_1$ be defined by
\be\label{defb01}
b_0=\max\left\{\sqrt[4]{\frac{\kappa_1}{\kappa_2}},
\sqrt[4]{\frac{\kappa_2}{\kappa_1}}\right\}\quad b_1=\max
\left\{h\left(\sqrt{\frac{\kappa_1}{\kappa_2}}\right),
h\left(\sqrt{\frac{\kappa_2}{\kappa_1}}\right)\right\},
\ee
where $h(s)$ is defined in \eqref{defh1}.
 Then the following facts holds:
\vskip4pt
\noindent
{\rm (a)} if $b<b_0$ then either $u\equiv 0$ and $v\not\equiv 0$ or
$u\not\equiv 0$ and $v\equiv 0$.
\vskip2pt
\noindent
{\rm (b)} if $b>b_1$ then $u\not\equiv 0$ and $v\not\equiv 0$.
\end{proposition}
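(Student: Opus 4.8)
The plan is to compare the least energy level of the vectorial problem \eqref{lili} with the least energy levels of the two scalar equations $-\Delta u+\kappa_i u=u^3$ ($i=1,2$), using the scaling invariance of the nonlinearity together with the explicit form of the Nehari-manifold characterization of the ground state level. Writing $\mathfrak{m}(\kappa)$ for the least energy of $-\Delta w+\kappa w=w^3$ in $H^1(\R^3)$, a rescaling $w(x)=\sqrt{\kappa}\,U(\sqrt{\kappa}\,x)$, with $U$ the unique ground state of $-\Delta U+U=U^3$, gives $\mathfrak{m}(\kappa)=\kappa^{-1/2}\,\kappa^{2}\,\mathfrak{m}(1)=\kappa^{3/2}\mathfrak{m}(1)$ (up to the bookkeeping of the $L^2$ versus $L^4$ powers); in particular the scalar solutions $(U_{\kappa_1},0)$ and $(0,U_{\kappa_2})$ sit at energies $\mathfrak m(\kappa_1)$ and $\mathfrak m(\kappa_2)$ respectively, so the ground state level of the system is $c_b\le\min\{\mathfrak m(\kappa_1),\mathfrak m(\kappa_2)\}$ for every $b>0$.

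For part (a) I would argue by contradiction: suppose $(u,v)$ is a least energy solution with both components nontrivial. Testing each equation with its own component and using $\langle I'(u,v),(u,v)\rangle=0$, one gets two Nehari identities; combining them with the Gagliardo–Nirenberg/Sobolev constant for $U$ and keeping careful track of the cross term $b\int u^2v^2$, one estimates $c_b=I(u,v)$ from below. The point is that when $b$ is small the coupling term is a small perturbation, and a genuinely two-component critical point must have energy strictly larger than $\min\{\mathfrak m(\kappa_1),\mathfrak m(\kappa_2)\}$; the threshold at which this estimate breaks is precisely $b_0=\max\{(\kappa_1/\kappa_2)^{1/4},(\kappa_2/\kappa_1)^{1/4}\}$, the value appearing in \eqref{defb01}. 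The bookkeeping here amounts to substituting the scaling relations between $(\varphi_z,\psi_z)$ and $U$ and checking that the quartic form $s^4+2bs^2t^2+t^4$ stays below the "decoupled" value $s^4+t^4$ exactly when $b<b_0$; the algebraic inequality that must hold is $(1+bs^2)(\text{stuff})<(\text{decoupled stuff})$ for the relevant ratio $s^2=\sqrt{\kappa_1/\kappa_2}$.

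For part (b) the strategy is the reverse: one must show that when $b$ is large, neither scalar solution can be the ground state, i.e. $c_b<\min\{\mathfrak m(\kappa_1),\mathfrak m(\kappa_2)\}$, which by standard arguments (the ground state level is attained, and any minimizer with a zero component would reduce to a scalar ground state) forces both components to be nontrivial. To get the strict inequality I would take a suitable two-component test pair — the natural choice is $(t s\, U_\kappa, t\, U_\kappa)$ for an appropriately scaled $U_\kappa$ and a parameter $s>0$ to be optimized — evaluate $\sup_{t\ge0} I(ts U_\kappa, tU_\kappa)$ explicitly (this is a one-variable maximization, doable in closed form by homogeneity), and then optimize over $s$. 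The resulting expression is compared with $\mathfrak m(\kappa_i)$; the comparison succeeds when $b$ exceeds the quantity built from $h$ in \eqref{defh1}–\eqref{defb01}, where the two pieces of the $\min$ in the definition of $h$ come from the two natural competitors for the test function (one built from a single $U_\kappa$ scaled by $s$, one from a more balanced ansatz), and the $\max$ over the two ratios $\kappa_1/\kappa_2$, $\kappa_2/\kappa_1$ handles which of the two scalar levels is the smaller one.

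The main obstacle is the explicit constant tracking in both directions: one needs the sharp relation between the ground state energy of the rescaled scalar equation and that of $U$, and then a careful — but elementary — optimization of the quartic energy functional restricted to the two-parameter family $(ts U_\kappa, t U_\kappa)$, making sure the algebra reproduces exactly the thresholds $b_0$ and $b_1=\max\{h(\sqrt{\kappa_1/\kappa_2}),h(\sqrt{\kappa_2/\kappa_1})\}$. Since this is exactly the content of Lemma 3.1 and Theorem 2.9 of \cite{mmp} specialized to constant (but possibly unequal) potentials $\kappa_1,\kappa_2$, I would follow that computation verbatim, only keeping the two potentials distinct throughout; the modification is purely notational, replacing the single $\kappa$ by the pair and carrying the ratio $\kappa_1/\kappa_2$ through the inequalities.
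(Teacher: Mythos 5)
Your proposal correctly identifies the right strategy — an energy comparison with the scalar ground-state levels, whose mechanism is Lemma 3.1 and Theorems 2.3/2.8/2.9 of \cite{mmp} — and the paper does essentially the same thing, but more economically: it rescales $\overline{u}(x)=\kappa_1^{-1/2}u(\kappa_1^{-1/2}x)$, $\overline{v}(x)=\kappa_1^{-1/2}v(\kappa_1^{-1/2}x)$ to put \eqref{lili} into the normalized form of \cite{mmp}, namely $-\Delta\overline{u}+\overline{u}=\overline{u}^3+b\overline{v}^2\overline{u}$ and $-\Delta\overline{v}+\omega^2\overline{v}=\overline{v}^3+b\overline{u}^2\overline{v}$ with $\omega^2=\kappa_2/\kappa_1\le1$ (after a WLOG ordering), so the cited results apply verbatim and no rederivation with two free constants is needed. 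One small slip in your sketch: the scalar ground-state energy scales as $\mathfrak m(\kappa)=\kappa^{1/2}\mathfrak m(1)$, not $\kappa^{3/2}\mathfrak m(1)$, since under $w=\sqrt{\kappa}\,U(\sqrt{\kappa}\,\cdot)$ each of $\int|\nabla w|^2$, $\kappa\int w^2$, $\int w^4$ picks up exactly a factor $\kappa^{1/2}$.
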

\begin{proof}
Suppose that $(u,v)$ is a vector ground state of \eqref{lili} and assume,
without loss of generality, that $0<\kappa_{2}\leq\kappa_{1}$. Consider the
functions
\bdm
\overline{u}(x)=\dfrac{1}{\sqrt{k_{1}}}u\left( \dfrac{x}{\sqrt{k_{1}}} \right)\quad
\overline{v}(x)=\dfrac{1}{\sqrt{k_{1}}}v\left( \dfrac{x}{\sqrt{k_{1}}} \right),
\edm
the above system becomes
\bdm
\begin{cases}
-\Delta\ovu +\ovu =\ovu^3+b\ovv^2\ovu & \text{in $\R^3$}, \\
\noalign{\vskip2pt}
-\Delta \ovv + \om^{2} \ovv =\ovv^3+b\ovu^2\ovv & \text{in $\R^3$},
\end{cases}
\edm
where we set $\om^{2}=\kappa_{2}/\kappa_{1}\leq1$. Then, conclusion (a) follows by applying
\cite[Theorem 2.9]{mmp}, whereas conclusion (b) can be obtained by arguing as
in the proofs of \cite[Theorems 2.3, 2.8]{mmp} (see Remark 2.11 therein).
\end{proof}

\vskip2pt
\begin{proof}[{\it Proof of Theorem \ref{concentrazione}.}]
By virtue of Proposition \ref{maximo}, taking into account the
definition of $G$, the pair $(u_{\eps},v_{\eps})\neq(0,0)$ turns out to be a
solution of \eqref{problema}. From elliptic regularity theory it follows that
$u_\eps,\,v_\eps$ are nonnegative $C^2$ functions.  Let $\xi_\eps$ a local
maximum point of the function $u_{\eps}(x)+v_{\eps}(x)$, then
\begin{align*}
0\leq -\Delta (u_{\eps}+v_{\eps})(\xi_\eps)=& -V(\xi_\eps)u_{\eps}(\xi_\eps)-W(\xi_\eps)
v_{\eps}(\xi_\eps)    \\
&+\left(u^{2}_{\eps}(\xi_\eps)+bv^{2}_{\eps}(\xi_\eps)\right)
u_{\eps}(\xi_\eps)+\left(v^{2}_{\eps}(\xi_\eps)+bu^{2}_{\eps}(\xi_\eps)\right)v_{\eps}(\xi_\eps).
\end{align*}
Using \rife{positiveVW}, there exists a positive radius $\sigma$,
independent on $\eps$,  such that
\beq\label{stimainf}
(u_{\eps}+v_{\eps})(\xi_\eps)\geq\sigma.
\eeq
Let us first  prove \eqref{conveVW} of  conclusion (i) in
Theorem \ref{concentrazione} arguing by contradiction.
More precisely, consider $\eps_n\to 0$ and $x_n\in B(z,r)$
 a local maximum point of $u_{\eps_n}+v_{\eps_n}$.
Let $x_n\to x^*\in\overline{B}(z,r)$, and assume that
both $V(x^*)>V_0$ and $W(x^*)>W_0$. Then, we can consider the
sequences $\phi_n(x)=u_{\eps_n}(x_n+\eps_n x)$,
$\psi_n(x)=v_{\eps_n}(x_n+\eps_n x)$ and the limit $(\phi,\psi)$, critical point of the
limit functional $I_{x^*}$. First, note that $(\phi,\psi)\not=(0,0)$
thanks to \eqref{stimainf}; moreover,  by virtue of the inequalities $V(x^*)>V_0$ and
$W(x^*)>W_0$, the critical level $I_{x^{*}}(\phi,\psi)$ can be compared with
$\Sigma(z)$, yielding again a contradiction.
Then, in order to prove conclusion (i) of Theorem \ref{concentrazione}, it is
only left to show the uniqueness of the maximum point of  the function
$u_{\eps}+v_{\eps}$. Assume by contradiction that there exist
a sequence $\{\eps_{n}\}$ converging to zero and  two local
maxima $x^{1}_{n}$, $x^{2}_{n}$ $\in \overline{B}(z,r)$, which both satisfy
\eqref{stimainf}. We consider the sequences
$$
\phi_{n}(x)=u_{\eps_{n}}(x^{1}_{n}+\eps_{n}x)\quad\,\, \text{and}\,\, \quad
\psi_{n}(x)=v_{\eps_{n}}(x^{1}_{n}+\eps_{n}x).
$$
Arguing as before, we show that the couple $(\phi_{n},\psi_{n})$ converges in
the $C^{2}$ sense over compacts to a solution $(\phi,\psi)$ of \eqref{limit-z}
with $z=x_1$ and  $V(x^{1})=V_0$ and $W(x^{1})=W_0$.
From \eqref{stimainf} we get that $(\phi,\psi)\neq (0,0)$ and from
\cite{busi} we deduce that $(\phi,\psi)$ are nonnegative, radially
symmetric functions. Then the sum $\phi+\psi$ has a local non-degenerate
maximum point, which, up to translations,  is located
in the origin. This facts and the $C^{2}$
convergence of $\phi_{n}+\psi_{n}$ imply that $x_n=(x^{2}_{n}-x^{1}_{n})/\eps_{n}\to
\infty$.  Then we can argue as in the proof  of \eqref{claim} to get a contradiction.
Indeed, we consider the function
$$
h_n=\frac12\left[|\nabla \phi_n|^2+|\nabla \psi_n|^2+V(x^1_n+\eps_n x)\phi_n^2+
W(x^1_n+\eps_n x)\psi_n^2\right]-F(\phi_n,\psi_n).
$$
For every $\delta$ we can choose $R>0$ and $n_{0}$
sufficiently large such that $B_R\cap B_R(x_n)=\emptyset$ for every $n\geq n_{0}$
and
\be\label{dis1}
\lim_{n\to\infty}\int_{B_{R}(0)}h_n  \geq I_{x^1}(\phi,\psi)-\delta.
\ee
Moreover,
\begin{align*}
\lim_{n\to\infty}\int_{B_R(x_n)}\!\!\!\! h_n\!\! =\!\! \frac12\lim_{n\to\infty}
&\left\{\int_{B_R}\!\!\!\left[|\nabla \ovphi_n |^2+
|\nabla \ovpsi_n|^2+V(x^2_{n}+\eps_{n}x_{n})\ovphi_n^2\right.\right. \\
&\left.\left.\,\,\,\,+W(x^2_{n}+\eps_{n}x_{n})
\ovpsi_n^2\right]-\int_{B_R}F(\ovphi_n,\ovpsi_n)\right\}
\end{align*}
where we put $\ovphi_{n}(y)=\phi_{n}(y+x_{n})$, $\ovpsi_{n}(y)=\psi_{n}(y+x_{n})$.
As $V(x^{1})=V(x^{2})=V_0$ and $W(x^{1})=W(x^{2})=W_0$, we get
\be
\label{dis2}
\lim_{n\to\infty}\int_{B_R(x_n)}h_n \geq
I_{x^{2}}(\ovphi,\ovpsi)-\delta=I_{x^{1}}(\phi,\psi)-\delta.
\ee
Then, arguing as in the proof of \eqref{claim} we get
$$
\liminf_{n\to\infty}J_{n}(\phi_{n},\psi_{n})\geq 2 \Sigma(x^{1})=2\Sigma(z),
$$
which is in contradiction with \eqref{energia}.
\vskip3pt
\noindent
In order to prove the exponential decay, notice that, by Proposition \ref{maximo},
$u_{\eps}$ and $v_{\eps}$ decay to zero at infinity, uniformly with respect to $\eps$. Hence we find
$\rho>0$, $\Theta\in(0,\sqrt\alpha)$ and $\eps_0>0$ such that
$u_\eps^2+bv_\eps^2 \leq \alpha-\Theta^2$ and
$v_\eps^2+bu_\eps^2 \leq \alpha-\Theta^2$,
for all $|x-x_\eps|>\eps\rho$ and $0<\eps<\eps_0$. Let us set
$$
\xi_\rho(x)=M_\rho e^{-\Theta(|\frac {x-x_\eps}{\eps}|-\rho)},\qquad
M_\rho=\sup_{(0,\eps_0)}\max_{|x|=\rho} (u_\eps+v_\eps),
$$
and introduce the set ${\mathcal A}=\bigcup_{R>\rho}D_R,$
where, for any $R>\rho$,
$$
\quad D_R=\big\{\rho<|x|<R:\,\,
u_\eps(x)+v_\eps(x)>\xi_\rho(x)\,\,\,\,
\text{for some $\eps\in(0,\eps_0)$}\big\}.
$$
Assume by contradiction that ${\mathcal A}\not=\emptyset$.
Then there exist $R_*>\rho$ and $\eps_*\in(0,\eps_0)$ with
\begin{align*}
\eps^2\Delta(\xi_\rho-u_{\eps_*}-v_{\eps_*}) &
\leq\left[\Theta^2-\frac{2\eps\Theta}{|x-x_\eps|}\right]\xi_\rho
-\Theta^2 u_{\eps_*}-\Theta^2 v_{\eps_*} \\
\noalign{\vskip2pt}
& \leq\Theta^2 (\xi_\rho-u_{\eps_*}-v_{\eps_*})<0,
\qquad\text{in $D_R$ for all $R\geq R_*$}.
\end{align*}
Hence, by the maximum principle, we get
$$
\xi_\rho-u_{\eps_*}-v_{\eps_*} \geq
\min\Big\{\min_{|x|=\rho}(\xi_\rho-u_{\eps_*} -v_{\eps_*}),
\min_{|x|=R}(\xi_\rho-u_{\eps_*} -v_{\eps_*})\Big\},
$$
in $D_R$ for all $R\geq R_*$. Letting $R\to\infty$ and
recalling the definition of $\xi_\rho$ yields
$$
\xi_\rho-u_{\eps_*}-v_{\eps_*}\geq \min
\Big\{\min_{|x|=\rho}(\xi_\rho-u_{\eps_*}-v_{\eps_*}),0\Big\}\geq 0,
\quad\text{in $\bigcup_{R\geq R_*}D_R$}.
$$
In turn, $u_{\eps_*}(x)+v_{\eps_*}(x)\leq \xi_\rho(x)$ for
all $x$ in $\cup_{R\geq R_*}D_R$, which yields a contradiction.
Whence ${\mathcal A}=\emptyset$, and the desired exponential decay follows.
\vskip2pt

Now we prove conclusion (ii) of Theorem \ref{concentrazione}. Once again, let us set
$(\phi_{\eps},\psi_{\eps})=(u_{\eps}(x_{\eps}+\eps x), v_{\eps}(x_{\eps}+\eps x))$.
Note that \rife{stimato} gives us $\|(\phi_{\eps},\psi_{\eps})\|_{\H} \leq C$
and the pair $(\phi_{\eps},\psi_{\eps})$ solves
\bdm
\begin{cases}
-\Delta\phi_{\eps}+V(x_{\eps}+\eps x)\phi_{\eps}
=\phi_{\eps}^3+b\psi_{\eps}^2\phi_{\eps} & \text{in $\R^3$} \\
\noalign{\vskip4pt}
-\Delta\psi_{\eps}+W(x_{\eps}+\eps x)\psi_{\eps}
=\psi_{\eps}^3+b\phi_{\eps}^2\psi_{\eps} & \text{in $\R^3$}.
\end{cases}
\edm
From the conclusion (i) we have that $x_\eps$ converges to $p$, with
$V(p)=V_0$ and $W(p)=W_0$, and $(\phi_\eps,\psi_\eps)$ converges to
$(\phi,\psi)$, least energy solution of \eqref{lili} with
$\kappa_{1}=V_{0}$ and $\kappa_{2}=W_{0}$.
Then, if $b<b_0$, in the light of Proposition \ref{leastp}, either
$\phi\equiv 0$ or $\psi\equiv 0$.
Since  $\phi_\eps$ and $\psi_\eps$ converge uniformly over compacts, we have
that either $u_\eps(x_\eps)=\phi_\eps(0)\to 0$ or $v_\eps(x_\eps)=\psi_\eps(0)\to 0$.
Similarly, if $b>b_1$, in the light of Proposition \ref{leastp} $\phi\neq 0$ and $\psi\neq 0$,
and the assertion follows.
\end{proof}

\begin{remark}\label{survivor}
In the previous theorem we have proved that the least energy solution
$(u_\eps,v_\eps)$ converges (up to scalings) to a least energy (by
\eqref{energia}) solution $(\phi,\psi)$ of
\be\label{sistem}
\begin{cases}
-\Delta \phi+V_0\phi=\phi^3+b\psi^2\phi, & \\
-\Delta \psi+W_0\psi=\psi^3+b\phi^2\psi. &
\end{cases}
\ee
Moreover, for $b<b_0$, one between $\phi,\psi$ is necessarily zero;
so that $(\phi,\psi)$ is actually either $(\phi,0)$ or $(0,\psi)$, with $\phi$
(respectively $\psi$) the unique least energy solution of
$-\Delta \phi+V_0\phi=\phi^3$ (respectively $-\Delta \psi+W_0\psi=\psi^3$).
Then, if $V_0<W_0$, the least scalar energy solution of \eqref{sistem}  is
$(\phi,0)$, yielding $v_\eps(x_\eps)\to0$. Otherwise, if $W_0<V_0$,
$u_\eps(x_\eps)\to0$.
\end{remark}

\vskip4pt
\begin{proof}[{\it Proof of Theorem \ref{concentrazioneG}.}]
It suffices to run through the various steps of the proof of
Theorem \ref{concentrazione} up to formula \eqref{diseq}.
Now, in order to obtain \eqref{cruc-step} we can use hypothesis
\eqref{minimoS} instead of \eqref{minimoV}, \eqref{minimoW} to get directly
\begin{equation*}
\Sigma(z)<\Sigma(x_0)\leq J_{x_0}(\phi,\psi)\leq
\liminf_{n\to\infty}J_{n}(\phi_{n},\psi_{n})\leq \Sigma(z),
\end{equation*}
as $x_0\in\partial B(z,r)$ and $z\in B(z,r)$, yielding the desired contradiction and
thus eventually proving Proposition \ref{maximo}. If $x_\eps$ is the
sequence of maximum points, there holds $\Sigma(x_\eps)\to \Sigma_0$,
otherwise one would get a contradiction similar to the one above.
The dichotomy and the exponential decay can be proved exactly
as we have done in the proof of Theorem \ref{concentrazione}.
\end{proof}

%%%%%%%%PROVA NECESSARIA

\section{Proof of Theorem \ref{necessthm}}
\label{nec-ps}

In this section we will prove  Theorem \ref{necessthm}. To this aim,
the following preliminary lemma will be useful.

\begin{lemma}
\label{criticiVW}
Assume that $V,\,W\in C^1(\Rn)$ satisfy \eqref{growthVW}.
If $z\in\mathcal{E}$, then
\beq\label{combin}
\gamma_1(z)\nabla V(z)+\gamma_2(z)\nabla W(z)=0,
\ee
for some $\gamma_1(z)\geq 0$, $\gamma_2(z)\geq 0$, one of them being nontrivial.
\end{lemma}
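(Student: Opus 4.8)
The plan is to derive a Pohozaev-type (variational) identity by differentiating the rescaled equation with respect to the concentration point and exploiting that $z\in\mathcal E$ forces the rescaled solutions to converge, up to translation, to a least energy solution of the frozen system $(S_z)$. Concretely, fix $z\in\mathcal E$ and let $(u_\eps,v_\eps)$ be the associated sequence of solutions of \eqref{problema}, and set $\phi_\eps(x)=u_\eps(z+\eps x)$, $\psi_\eps(x)=v_\eps(z+\eps x)$; these solve
\begin{equation*}
\begin{cases}
-\Delta\phi_\eps+V(z+\eps x)\phi_\eps=\phi_\eps^3+b\psi_\eps^2\phi_\eps, & \\
-\Delta\psi_\eps+W(z+\eps x)\psi_\eps=\psi_\eps^3+b\phi_\eps^2\psi_\eps, &
\end{cases}
\end{equation*}
with $\eps^{-3}J_\eps(u_\eps,v_\eps)\to\Sigma(z)$ and $\phi_\eps+\psi_\eps\to 0$ uniformly in $\eps$ as $|x|\to\infty$. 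Using the uniform energy bound \eqref{stimato} (which holds for solutions of \eqref{problema} as well) together with the uniform decay at infinity, one extracts $(\phi_\eps,\psi_\eps)\to(\phi,\psi)$ in $H^1\times H^1$ and in $C^2_{\rm loc}$, where $(\phi,\psi)$ is a nontrivial critical point of $I_z$; the matching of energies and the characterization of $\Sigma(z)$ as the Mountain Pass level force $(\phi,\psi)$ to be a least energy solution of $(S_z)$, i.e.\ $I_z(\phi,\psi)=\Sigma(z)$.

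Next I would differentiate the energy along the concentration point. The key observation is that, for the \emph{exact} solutions $(u_\eps,v_\eps)$ of \eqref{problema}, testing the equations against $\eps\,\partial_{x_j}u_\eps$ and $\eps\,\partial_{x_j}v_\eps$ and integrating by parts kills all the terms except those where the derivative hits the potentials, giving the identity
\begin{equation*}
\int_{\R^3}\partial_{x_j}V(x)\,u_\eps^2+\int_{\R^3}\partial_{x_j}W(x)\,v_\eps^2=0,
\qquad j=1,2,3,
\end{equation*}
which requires justifying the integrations by parts (legitimate since $u_\eps,v_\eps\in C^2$ and, by the exponential decay in Theorem \ref{concentrazione}(i) applied in this setting, together with the growth bound \eqref{growthVW}, all integrals converge absolutely). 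Rescaling $x=z+\eps y$ turns this into
\begin{equation*}
\int_{\R^3}\partial_{x_j}V(z+\eps y)\,\phi_\eps^2\,dy+\int_{\R^3}\partial_{x_j}W(z+\eps y)\,\psi_\eps^2\,dy=0.
\end{equation*}
Passing to the limit $\eps\to 0$ — using dominated convergence, justified by the uniform exponential decay of $\phi_\eps+\psi_\eps$ and the bound $|\nabla V(z+\eps y)|\le\beta e^{\gamma|z|}e^{\gamma\eps|y|}$ from \eqref{growthVW}, which for $\eps$ small is controlled by the Gaussian-type decay — yields
\begin{equation*}
\partial_{x_j}V(z)\int_{\R^3}\phi^2+\partial_{x_j}W(z)\int_{\R^3}\psi^2=0,\qquad j=1,2,3,
\end{equation*}
i.e.\ \eqref{combin} with $\gamma_1(z)=\int_{\R^3}\phi^2\ge 0$ and $\gamma_2(z)=\int_{\R^3}\psi^2\ge 0$. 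Since $(\phi,\psi)\neq(0,0)$, at least one of $\gamma_1(z),\gamma_2(z)$ is strictly positive, which is exactly the nontriviality claim.

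I expect the main obstacle to be the \emph{uniform-in-$\eps$} control needed to pass to the limit in the integral identity, and more subtly the justification that the translation parameters do not run off to infinity: a priori the natural center of mass of $(\phi_\eps,\psi_\eps)$ could drift, in which case the $C^2_{\rm loc}$ limit might be trivial. This is ruled out exactly by the hypothesis defining $\mathcal E$ — the uniform decay $u_\eps(z+\eps x)+v_\eps(z+\eps x)\to 0$ as $|x|\to\infty$ \emph{uniformly in $\eps$} — which prevents escape of mass and pins the concentration at $y=0$; combined with $\eps^{-3}J_\eps\to\Sigma(z)$ this gives a genuine least energy limit. The remaining points (absolute convergence of all integrals, validity of the integration by parts, and the dominated convergence step) are routine given the exponential decay from Theorem \ref{concentrazione}(i) and the exponential growth bound \eqref{growthVW} on the gradients of the potentials, provided $\eps$ is taken small enough that $\gamma\eps$ is dominated by the decay rate $\mu_2$.
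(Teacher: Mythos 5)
Your proposal is correct and follows essentially the same route as the paper: both derive the identity $\int \partial_j V(z+\eps_n x)\varphi_n^2 + \partial_j W(z+\eps_n x)\psi_n^2 = 0$ from the translation-covariance of the equations, then pass to the limit $n\to\infty$ using the uniform exponential decay and the gradient growth bound \eqref{growthVW}, identifying $\gamma_1(z)=\|\varphi_z\|_2^2$ and $\gamma_2(z)=\|\psi_z\|_2^2$. The only cosmetic difference is that you carry out the integration by parts against $\partial_j u_\eps,\,\partial_j v_\eps$ directly, whereas the paper cites the Pucci--Serrin variational identity with a cutoff field $\boldsymbol{h}_j^\ell(x)=\Upsilon(\lambda x)\delta_{j\ell}$ and then lets $\lambda\to 0$, which packages the same computation but avoids having to control the boundary terms at infinity by hand.
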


\begin{proof}
Let $z\in\mathcal{E}$,  $\eps_n$ a sequence converging to zero and
$(u_{\eps_n},v_{\eps_n})$ solution of problem \eqref{problema} that satisfies
the properties in  Definition \ref{defe}. Let us define
$\vfi_n(x)=u_{\eps_n}(z+\eps_n x)$, $\psi_n(x)=v_{\eps_n}(z+\eps_n x)$
and the lagrangian
${\mathcal L}: \Rn \times \R \times \R \times \Rn \times \Rn \to\R$ defined as
\bdm
{\mathcal L}(x,s_1,s_2,\xi_1,\xi_2)=
\frac{|\xi_1|^2+|\xi_2|^2}{2}+V(z+\eps_n x)\frac{s_1^2}{2}
+W(z+\eps_n x) \frac{s_2^2}{2}-\frac{s_1^4+2bs_1^2s_2^2+s_2^4}{4}.
\edm
By the Pucci-Serrin identity for systems \cite[see \textsection 5]{puc-ser}, we have
\begin{align*}
& \sum\limits^3_{i,\ell=1}
\irn \partial_i{\boldsymbol h}^\ell  \partial_i \psi_n \, \partial_\ell \psi_n
+\sum\limits^3_{i,\ell=1} \irn\partial_i {\boldsymbol h}^\ell  \partial_i \vfi_n \, \partial_\ell \vfi_n \\
&=\irn {\rm div} {\boldsymbol h}\, {\mathcal L}(x,\vfi_n,\psi_n,\n \vfi_n,\n \psi_n) \\
&+\frac 12\irn \eps_n{\boldsymbol h} \cdot  [\nabla V(z+\eps_n x)\varphi_n^2+
\nabla W(z+\eps_n x)\psi_n^2],
\end{align*}
for all ${\boldsymbol h} \in C^1_{\rm c}\left(\Rn, \Rn \right)$.
Let us choose, for any $\lambda>0$,
$$
{\boldsymbol h}_j:\R^3\to\R^3,\qquad
{\boldsymbol h}_j^\ell(x)=
\begin{cases}
\Upsilon(\lambda x) & \text{if $j=\ell$}, \\
0 & \text{if $j\neq \ell$},
\end{cases}\qquad \ell=1,2,3,
$$
$\Upsilon\in C^1_{{\rm c}} (\Rn)$,
$\Upsilon(x)=1$ if $|x|\leq 1$ and $\Upsilon(x)=0$
if $|x|\geq 2$. Then, for $j=1,\dots,3$,
\begin{align*}
& \sum\limits^3_{i=1}
\irn \lambda \partial_i \Upsilon(\lambda x) \partial_i \psi_n \, \partial_j \psi_n
+\sum\limits^3_{i=1}\irn \lambda \partial_i \Upsilon(\lambda x) \partial_i \vfi_n \, \partial_j \vfi_n \\
&=\irn \lambda \partial_j \Upsilon(\lambda x) {\mathcal L}(x,\vfi_n,\psi_n,\n \vfi_n,\n \psi_n) \\
&+\frac 12\irn \eps_n \Upsilon(\lambda x)[\partial_j V(z+\eps_n x)\varphi_n^2+
\partial_j W(z+\eps_n x)\psi_n^2].
\end{align*}
By the arbitrariness of $\lambda>0$, letting
$\lambda \to 0$ and keeping $j$ fixed, we obtain
\[
\irn [\partial_j V(z+\eps_n x)\varphi_n^2+
\partial_j W(z+\eps_n x)\psi_n^2]=0
\qquad j=1,2,3.
\]
By assumption \eqref{growthVW}, there exists a positive constant $\beta_1$
such that, for all $x\in\R^3$ and $j\geq 1$, we get
$|\nabla V(z+\eps_n x)|\leq \beta_1 e^{\gamma\eps_n |x|}$
and $|\nabla W(z+\eps_n x)|\leq \beta_1 e^{\gamma\eps_n |x|}$,
so that, invoking the uniform exponential decay of $\varphi_n$ and $\psi_n$, letting
$n\to \infty$ in the above identity, there holds
\begin{equation}\label{part-fin}
\irn (\partial_j V(z)\varphi_z^2+\partial_j W(z)\psi_z^2)=0,
\qquad j=1,2,3,
\end{equation}
where $(\varphi_{z},\psi_{z})\neq(0,0)$ is a least energy solution of \rife{limit-z}.

Therefore \eqref{combin} holds with
$\gamma_1(z)=\|\varphi_z\|_{2}^2$ and $\gamma_2(z)=\|\psi_z\|_{2}^2$.
\end{proof}

%%%%%%%%%%PROVA TEO NECESSARIA
\begin{proof}[{\it Proof of Theorem \ref{necessthm}.}]
First, we will show that  $\Sigma$ is a continuous function.
Recall from \cite[Lemma 3.1]{mmp}
that, for every $\xi\in \Rn$ and  $w\in H^1\times H^1$ with $w\not=(0,0)$,  there exists
a unique $\theta(w,\xi)>0$ such that $\theta(w,\xi)w\in {\mathcal N}_\xi$
(defined in \eqref{defnehari}); the map $\{w\mapsto \theta(w,\xi)\}$
is continuous and $\{w\mapsto \theta(w,\xi)w \}$ is a homeomorphism
of the unit sphere of $H^1\times H^1$ on ${\mathcal N}_\xi$.
In order to prove that $\Sigma$ defined in \eqref{defsigma} is
continuous, let us first consider  the potentials $V(x),\,W(x)$
as positive constants $V,\,W\in \R^+$. Following the line of \cite{rabinowitz},
we first show the continuity of the map
$(V,W)\to c(V,W)$, where $c(V,W)$ is the mountain pass level of the functional
 $I_{V,W}:H^1\times H^1\to\R$ defined by
\begin{equation*}
I_{V,W} (u,v)= \frac{1}{2}\int_{\R^3}|\nabla u|^2+|\nabla v|^2
+Vu^2+Wv^2- \int_{\R^3} F(u,v).
\end{equation*}
The following equalities hold (see Lemma 3.1 in \cite{mmp})
\begin{equation}\label{ceqn}
c(V,W)=\inf_{H^1\times H^1\setminus(0,0)}\max_{t\geq0}I_{V,W}(tu,tv)=
\inf_{{\mathcal N}_{V,W}} I_{V,W}
\end{equation}
where ${\mathcal N}_{V,W}$ is the Nehari manifold associated to $I_{V,W}$.
Note that \eqref{ceqn} implies that proving the continuity of the map $c(V,W)$
is equivalent to show the continuity of the map $(V,W)\mapsto \Sigma(V,W)$.
Let us first show that
\begin{equation}
\label{claimc} \lim_{\eta \to 0} c(V+\eta,W+\eta) = c(V,W).
\end{equation}
It is readily seen that the following monotonicity property holds
\begin{equation}
\label{dismp} V_1>V_2,\,\, W_1>W_2\,\,\,\,\Longrightarrow\,\,\,\,
c(V_1,W_1) \geq c(V_2,W_2).
\end{equation}
By virtue
of~\eqref{dismp}, we get
\begin{equation}\label{cmeno}
\lim_{\eta \to 0^-} c(V+\eta,W+\eta):= c^- \leq c(V,W).
\end{equation}
Let $\eta_h \to 0^-$ and $\delta_h \to 0^+$ as
$h\to\infty$. By the definition of $c(V+\eta,W+\eta)$ and \eqref{ceqn},
and since the map $\theta$
induces an homeomorphism of the unit sphere of $H^1\times H^1$
on ${\mathcal N}_{V+\eta_h,W+\eta_h}$, there exists
$(u_h,v_h)\in H^1\times H^1$, such that
\begin{align}
\label{norma}
\|\nabla u_h\|_2^2+\|\nabla v_h\|_2^2 &+\|u_h\|_2^2+\|v_h\|_2^2=1,
\\\label{maxinequal}
\max _{t\geq 0}I_{V+\eta_h,W+\eta_h}(t u_h,t v_h)&\leq
c(V+\eta_h,W+\eta_h)+\delta_h.
\end{align}
We will  first show that $\theta(u_{h},v_{h})$,
 given by
\begin{equation}
\label{formult0} \theta (u_h,v_h)=\sqrt\frac{\|\nabla
u_h\|^2_{2}+\|\nabla v_h\|^2_{2} +V\|u_h\|^2_{2}+W\|v_h\|^2_{2}}
{\|u_h\|^4_{4}+\|v_h\|^4_{4}+2b\|u_hv_h\|^2_{2}},
\end{equation}
remains bounded. We argue by contradiction, therefore, we suppose, in virtue of
\eqref{norma} that
\be\label{zero}
\|u_h\|^4_{4}+\|v_h\|^4_{4}+2b\|u_hv_h\|^2_{2}\to 0.
\ee
From the Ekeland variational principle we obtain that there exists a
sequence $(\xi_{h},z_{h})$ such that
\begin{align}
\label{dista}\|u_h-\xi_h\|_{H^{1}}+\|v_h-z_h\|_{H^{1}} &\leq \sqrt\delta_h,
\\
\nonumber c(V+\eta_h,W+\eta_h)-\delta_h<I_{V+\eta_h,W+\eta_h}(\xi_h,z_h)
&<c(V+\eta_h,W+\eta_h)+\delta_{h},
\\
\nonumber I_{V+\eta_h,W+\eta_h}'(\xi_h,z_h) &\to 0.
\end{align}
From \eqref{dista} and \eqref{zero} it follows that
$$
\|\xi_h\|^4_{4}+\|z_h\|^4_{4}+2b\|\xi_hz_h\|^2_{2}\to 0.
$$
Then
\begin{align*}
0 <c^- &=\lim_{h\to \infty}\left\{ I_{V+\eta_h,W+\eta_h}(\xi_h,z_h)-
\frac{1}{2}\langle I_{V+\eta_h,W+\eta_h}'(\xi_h,z_h),(\xi_h,z_h)\rangle\right\}
\\
&=\frac14\lim_{h\to \infty}
\left\{\|w_h\|^4_{4}+\|z_h\|^4_{4}+2b\|w_hz_h\|^2_{2}\right\}=0,
\end{align*}
which is an obvious contradiction, proving that $\theta(u_h,v_h)$ remain bounded.
Denoting with $\theta(u,v)=\theta(u,v,V,W)$ and using the
definition we have
 $$
I_{V,W}(\theta(u,v)u,\theta(u,v)v )=\max_{t\geq 0} I_{V,W}(tu,tv).
$$
In virtue of \eqref{ceqn}, \eqref{dismp}, \eqref{cmeno} and \eqref{maxinequal},
it results
\begin{align*}
c(V,W) &
\leq I_{V,W}(\theta(u_h,v_h)u_h,\theta(u_h,v_h)v_h)
\\
&= I_{V+\eta_h,W+\eta_h} (\theta(u_h,v_h)u_h,\theta(u_h,v_h)v_h)
- \frac{\eta_h}2\theta^{2}(u_h,v_h)( \|u_h\|_2^2+\|v_h\|_2^2)
\\
&\leq c(V+\eta_h,W+\eta_h) + \delta_h -\frac{ \eta_h}2 \theta^2(u_h,v_h)
( \|u_h\|_2^2+\|v_h\|_2^2)
\\
&\leq c^- + \delta_h - \frac{\eta_h}2 \theta^2(u_h,v_h)
( \|u_h\|_2^2+\|v_h\|_2^2)
\\
&\leq c(V,W)+ \delta_h - \frac{\eta_h}2 \theta^2(u_h,v_h)
( \|u_h\|_2^2+\|v_h\|_2^2).
\end{align*}
From \eqref{norma} and as $\theta$ is an homeomorphism on the unit sphere,
it follows,  for $h\to\infty$, that  $c(V,W)=c^-$.
In a similar fashion one can prove that
\begin{equation}\label{claimc2}
c(V,W) =\lim_{\eta\to 0^+} c(V+\eta,W+\eta)
\end{equation}
Therefore \eqref{claimc} is proved.
Let now $\{z_h\}$ be a sequence in $\R^3$ such that $z_h\to z$ as
$h\to\infty$. Observe that, given $\eta >0$, for large $h$, we
have
\begin{align*}
V(z)+\eta &\geq V(z)+|V(z_h)-V(z)| \\
&\geq V(z_h) \geq V(z) - |V(z_h)-V(z)|\geq V(z) - \eta,
\end{align*}
and similar relations hold for $W$. From \eqref{claimc} and \eqref{claimc2}
we deduce that $c(V(z)+\eta,W(z)+\eta)$ and $c(V(z)-\eta,W(z)-\eta)$
both converge to $c(V(z),W(z))$, yielding the desired continuity of
$z\mapsto \Sigma(z)$.

Let us show that the function $\Sigma$ defined in \eqref{defsigma} is
locally  Lipschitz continuous. We denote by $\SS(z)$ the set of the nonnegative radial
critical points of $I_{z}$ of least energy.
Let $z\,\xi\in\R^3$ and $(\phi_z,\psi_z)\in\SS(z)$, we denote here
 $\theta (z,\xi)=\theta(\phi_z,\psi_z,\xi)=\theta(\phi_z,\psi_z,V(\xi),W(\xi))$.
Then
\begin{equation*}
\Sigma(\xi)-\Sigma(z)\leq I_\xi(\theta(z,\xi)(\phi_z,\psi_z))-I_z(\phi_z,\psi_z).
\end{equation*}
Defining the function
\beq\label{defh}
h(\xi)=I_\xi(\theta(z,\xi)(\phi_z,\psi_z))
\eeq
and noting that $\theta(z,z)=1$ we obtain
\beq\label{dissigma}
\Sigma(\xi)-\Sigma(z)\leq h(\xi)-h(z).
\eeq
In order to prove that $\Sigma$ is locally Lipschitz, we will use the mean value theorem
applied to the function $h(\xi)$, so that we will show that  $\nabla h$ is bounded.
First observe that, since $\theta (z,\xi) (\phi_z,\psi_z)
\in{\mathcal N}_\xi$ we get that $\theta (z,\xi)$ is given by
\eqref{formult0} with $u_h=\phi_h$, $v_h=\psi_h$ and  $V=V(\xi)$, $W=W(\xi)$.
From the continuity of the critical level in dependence of $V(\xi), \,W(\xi)$
and from the continuity of $\Sigma $ we obtain that the functions
\begin{align*}
 (z,\xi) &\mapsto \|\nabla \phi_z\|^2_{2}+\|\nabla \psi_z\|^2_{2}
+V(\xi)\|\phi_z\|^2_{2}+W(\xi)\|\psi_z\|^2_{2},
\\
z & \mapsto \|\phi_z\|^4_{4}+\|\psi_z\|^4_{4}
+2b\|\phi_z \psi_z\|^2_{2}
\end{align*}
remain bounded and away from zero from below as $z$ and $\xi$ remain bounded,
so that $\theta(z,\xi)$ remains bounded for $(z,\xi)$ bounded.
Moreover,  $\theta(z,\xi)$ is differentiable with respect to the
variable $\xi$ so that also the function $h$ defined in \eqref{defh} is differentiable and
its gradient  is  given by
\begin{align*}
\nabla h(\xi)=&\n_\xi I_\xi (\theta (z,\xi) (\phi_z,\psi_z))
= \frac{\theta (z,\xi)^2}2 \left[\nabla V(\xi)\|\phi_z\|_2^2+\nabla W(\xi)\|\psi_z\|_2^2\right]
\\
& +\theta (z,\xi)\nabla_\xi \theta (z,\xi)\left[
\|\nabla \phi_z|_2^2+\|\nabla \psi_z\|_2^2+V(\xi)\|\phi_z\|_2^2+W(\xi)\|\psi_z\|^2
\right]
\\
&-\theta (z,\xi)^3\nabla_\xi\theta(z,\xi)\left[
\|\phi_z\|_4^4+\|\psi_z\|_4^4+2b\|\phi_z\psi_z\|_2^2)\right],
\end{align*}
so that
\begin{align*}
\nabla h(\xi)
&=\frac{\theta (z,\xi)^2}{2}\left[\nabla V(\xi)\|\phi_z\|^2+\nabla W(\xi)\|\psi_z\|_2^2\right]
\\
&+\frac{\nabla_\xi\theta(z,\xi)}{\theta(z,\xi)^2}I_\xi'
(\theta (z,\xi)\phi_z,\theta (z,\xi)\psi_z)[\theta (z,\xi)\phi_z,\theta (z,\xi)\psi_z].
\end{align*}
Hence, since $(\theta (z,\xi)\phi_z,\theta (z,\xi)\psi_z)\in \Ne_\xi$,  we get
$$
\nabla h(\xi)=\frac{\theta (z,\xi)^2}{2}\left[\nabla V(\xi)\|\phi_z\|^2
+\nabla W(\xi)\|\psi_z\|_2^2\right]
$$
This formula, \eqref{dissigma},  the mean-value theorem
applied to the function $h$ and the local boundedness of $\theta$
imply that $\Sigma$ is locally Lipschitz (in order to get the opposite
inequality, it suffices to switch $z$ with $\xi$).
\vskip4pt
\noindent
Now, let us prove conclusion (a) of Theorem \ref{necessthm}.

Let $z\in\mathcal{E}$ and $(u_{\eps_n},v_{\eps_n})\subset \H$
a sequence of solutions to~\eqref{problema}
that satisfy the properties in Definition \ref{defe}.
Let us consider for all $n\geq 1$ $\eps_n\to 0$ and
the sequences $\vfi_n (x)=u_{\eps_n}(z+\eps_n x)$,
$\psi_n (x)=v_{\eps_n}(z+\eps_n x)$, so that
$\vfi_n(x)+\psi_n(x)\to 0$ as $|x|\to\infty$,
uniformly with respect to $n$ and
$J_{\eps_n}(\vfi_n,\psi_n)\to\Sigma(z)$ as $n\to\infty$.
The sequence $(\vfi_n,\psi_n)$ converges $C^2$ over compacts
to $(\vfi_z,\psi_z)$ a least energy
solution of  \eqref{limit-z}, and $\vfi_z,\,\psi_z$ are radially and
exponentially decaying (see \cite{busi}), that is $(\vfi_z,\psi_z)$ belongs
to $\SS(z)$.
\\
Consider the scalar problems
\begin{equation}
\label{limitV}
\tag{$S^V_z$}
\begin{cases}
-\Delta u + V(z)u = u^3  \quad\text{in $\R^3$,} &\\
\, u>0,\,\, u\in H^1, &\\
\, u(0)=\max\limits_{\R^3}u,
\end{cases}
\end{equation}
\begin{equation}
\label{limitW}
\tag{$S^W_z$}
\begin{cases}
-\Delta v + W(z)v = v^3 \quad\text{in $\R^3$,}& \\
\, v>0,\,\, v\in H^1, &\\
\, v(0)=\max\limits_{\R^3}v.
\end{cases}
\end{equation}
It is known (see \cite{berplions}, \cite{K}) that \eqref{limitV} and \eqref{limitW} have a
unique ground state solution.
Notice that Proposition \ref{leastp}
implies that, if $z\in\mathcal{O}_{b}$, then $(\varphi_{z},\psi_{z})$ has
necessarily one trivial component. So that, the following possibilities may
occur:
\vskip5pt
\noindent
{\bf I.} $z\in{\mathcal O}_b$ and $\vfi_z=0$ and $\psi_z$ is
a nontrivial solution to \eqref{limitW};
\vskip5pt
\noindent
{\bf II.} $z\in{\mathcal O}_b$ and $\psi_z=0$ and $\vfi_z$
is a nontrivial solution to \eqref{limitV};
\vskip5pt
\noindent
{\bf III.} $z\in\mathcal{E}\setminus\mathcal{O}_b=\mathcal{E}_\Sigma$.
\vskip5pt
\noindent
It is readily seen by a simply scaling that, if $\vfi_z\neq 0$ or $\psi_z\neq 0$,
$$
\vfi_z(x)=\sqrt{V(z)}U_0(\sqrt{V(z)}x),\qquad
\psi_z(x)=\sqrt{W(z)}U_0(\sqrt{W(z)}x),
$$
where $U_0$ is the unique solution to $-\Delta u+u=u^3$.
Since $\psi_{n}$ converges uniformly to $\psi_{z}$, which has its global
maximum point in the origin, case I corresponds to $z\in\mathcal{E}_{W}$.
In such a case, in light of \eqref{combin}, there holds $\gamma_1(z)=0$,
$\gamma_2(z)\neq 0$, namely $z\in {\rm Crit}(W)$.
Arguing as above it is possible to show that the
situation of case II implies that $z\in {\mathcal E}_V$ and
$z\in {\rm Crit}(V)$.
Of course ${\mathcal E}_V\cap {\mathcal E}_W\cap\{V\neq W\}=\emptyset$. Indeed, if
$z^*\in{\mathcal E}_V\cap {\mathcal E}_W\cap\{V\neq W\}$ there would exist two sequences
$(u^1_j,v^1_j)$ and $(u^2_j,v^2_j)$ of solutions to \eqref{problema} such that the corresponding
scaled solutions $(\vfi^1_j,\psi^1_j)$ and $(\vfi^2_j,\psi^2_j)$ converge in the $C^2$ sense over
compact sets to $(\varphi^1_{z^*},\psi^1_{z^*})\in\SS(z^*)$ and
$(\varphi^2_{z^*},\psi^2_{z^*})\in \SS(z^*)$ and such that $\vfi^1_j(0)\geq \delta>0$
(since $z^*\in{\mathcal E}_V$) and $\psi^2_j(0)\geq\delta>0$ (since
$z^*\in{\mathcal E}_W$), for every $j$. As a consequence, letting $j\to\infty$,
we get $\varphi^1_{z^*}\neq 0$ and $\psi^2_{z^*}\neq 0$. Now, in light of
Lemma \ref{leastp}, since $z^*\in{\mathcal O}_b$ and $(\varphi^1_{z^*},\psi^1_{z^*})$
and $(\varphi^2_{z^*},\psi^2_{z^*})$ have least energy, we have $\psi^1_{z^*}=0$
and $\varphi^2_{z^*}=0$. Therefore,
$$
\Gamma\sqrt{V(z^*)}=I_{z^*}(\varphi^1_{z^*},0)=\Sigma(z^*)
=I_{z^*}(0,\psi^2_{z^*})=\Gamma\sqrt{W(z^*)},
$$
contradicting that $V(z^*)\neq W(z^*)$.
The previous facts show that
$$
\mathcal{E}\cap {\mathcal O}_b\subseteq \mathcal{E}_V\cup\mathcal{E}_W,
\,\,\text{and}\,\, \mathcal{E}_V\times\mathcal{E}_W
 \subset{\rm Crit}(V)\times {\rm Crit}(W).
$$
Hence, we conclude that
$$
\mathcal{E}=(\mathcal{E}\cap {\mathcal O}_b)\cup (\mathcal{E}\setminus {\mathcal O}_b)
=\mathcal{E}_V\cup\mathcal{E}_W\cup\mathcal{E}_\Sigma,
$$
with $\mathcal{E}_V\times\mathcal{E}_W\subset {\rm Crit}(V)\times {\rm Crit}(W).$
To prove conclusion (a) of Theorem \ref{necessthm} it  is only left to show that
$ \mathcal{E}_\Sigma\subset {\rm Crit}_C(\Sigma)$.
In order to do this we will first prove that the directional derivatives from the left and the
right of $\Sigma$ at every point $z\in\R^3$ along any $\eta\in\R^3$ exist and it holds
\begin{align*}
&\left(\frac{\partial\Sigma}{\partial \eta}\right)^{-}\!\!(z)=
\sup_{(\varphi_z,\psi_z)\in \SS(z)} \frac{\partial I_z}{\partial \eta}(\varphi_z,\psi_z), \\
&\left(\frac{\partial\Sigma}{\partial \eta}\right)^{+}\!\!(z)=
\inf_{(\varphi_z,\psi_z)\in \SS(z)} \frac{\partial I_z}{\partial \eta}(\varphi_z,\psi_z),
\end{align*}
that is, explicitly,
\begin{align}
\label{form-dder}
\left(\frac{\partial\Sigma}{\partial \eta}\right)^{-}\!\!(z)&=
\sup_{(\varphi_z,\psi_z)\in \SS(z)}\frac{1}{2}\left\{
\frac{\partial V}{\partial \eta}(z)\|\varphi_z\|^2_{2}  +
\frac{\partial W}{\partial \eta}(z)\|\psi_z\|^2_{2}\right\}, \\
\left(\frac{\partial\Sigma}{\partial \eta}\right)^{+}\!\!(z)&=
\inf_{(\varphi_z,\psi_z)\in \SS(z)}\frac{1}{2}\left\{
\frac{\partial V}{\partial \eta}(z)\|\varphi_z\|^2_{2} +
\frac{\partial W}{\partial \eta}(z)\|\psi_z\|^2_{2}\right\},\notag
\end{align}
for every $z,\eta\in\R^3$.

Let $\{\mu_j\}\subset\R^3$ be a sequence con\-ver\-ging to $\mu_0$
and let $(u_j,v_j)$ be a cor\-re\-spon\-ding sequence of solutions
of least energy $\Sigma(\mu_j)$. We want to prove that, up to a subsequence,
$u_j\to u_0$ and $v_j\to v_0$, strongly in $H^1$, with $(u_0,v_0)\in \SS(\mu_0)$.
It is straightforward to see that $(u_j,v_j)$ is bounded in $H^1\times H^1$
so that, up to a subsequence, it converges weakly to a pair $(u_0,v_0)$, and
$u_j\to u_0$ and $v_j\to v_0$ locally in the $C^2$-sense,
so that $(u_0,v_0)$ is a solution to the limiting problem with $\mu=\mu_0$.
Moreover, as previously observed, there exists $\delta>0$ such that
$u_0^2(0)+v_0^2(0)\geq \delta$, which entails $u_0\neq 0$ or $v_0\neq 0$.
Observe that, by the continuity of $\S$ and by Fatou's Lemma, we get
$$
\S(\mu_0)=\lim_{j\to\infty}\S(\mu_j)=
\lim_{j\to\infty}I_{\mu_j}(u_j,v_j)\geq I_{\mu_0}(u_0,v_0)\geq \S(\mu_0).
$$
Hence, in particular, it holds $I_{\mu_j}(u_j,v_j)\to I_{\mu_0}(u_0,v_0)=\S(\mu_0)$
as $j\to\infty$, that is
\begin{gather*}
\lim_{j\to\infty}\int_{\R^3}|\nabla u_j|^2+|\nabla v_j|^2+V(\mu_j)u_j^2+W(\mu_j)v_j^2  \\
=\int_{\R^3}|\nabla u_0|^2+|\nabla v_0|^2+V(\mu_0)u_0^2+W(\mu_0)v_0^2.
\end{gather*}
Then we have $(u_j,v_j)\to (u_0,v_0)$ strongly in $H^1\times H^1$.
For any $(\varphi,\psi)\in \SS(z)$, we get
\begin{align*}
\Sigma(z+t\eta)-\Sigma(z)&\leq I_{z+t\eta}(\vartheta(z,z+t\eta)\varphi,
\vartheta(z,z+t\eta)\psi)
-I_z(\varphi,\psi) \\
&=t \n_\xi I_\xi (\vartheta (\xi,z) \varphi,\vartheta (\xi,z) \psi)|_{\xi\in[z,z+t\eta]}.
\end{align*}
Whence, by  the arbitrariness of $(\varphi,\psi)\in \SS(z)$,
\begin{equation*}
\limsup_{t\to 0^+}\frac{\Sigma(z+t\eta)-\Sigma(z)}{t}
\leq \inf_{(\varphi,\psi)\in \SS(z)}\frac 12\Big\{
\nabla V(z)\cdot\eta\|\varphi\|^2_{2} +
\nabla W(z)\cdot\eta\|\psi\|^2_{2}\Big\}.
\end{equation*}
To get the opposite inequality, take $(\vfi,\psi)\in\mathbb{S}(z+t\eta)$.
It holds
\begin{align*}
\Sigma(z+t\eta)-\Sigma(z)&\geq I_{z+t\eta}(\varphi,\psi)
-I_z(\theta(z+t\eta,z)\varphi,\theta(z+t\eta,z)\psi)
\\
&=t \n_\xi I_\xi (\theta (\xi,z+t\eta) \varphi,\theta (\xi,z+t\eta) \psi)|_{\xi\in[z,z+t\eta]}.
\end{align*}
Using the continuity of $\theta$ and the convergence of $(\varphi,\psi)$ to an
element of $\mathbb{S}(z)$, we obtain
\begin{equation*}
\liminf_{t\to 0^+}\frac{\Sigma(z+t\eta)-\Sigma(z)}{t}
\geq \inf_{(\varphi,\psi)\in \SS(z)}\frac 12\Big\{
\nabla V(z)\cdot\eta\|\varphi\|^2_{2}  +
\nabla W(z)\cdot\eta\|\psi\|^2_{2}\Big\},
\end{equation*}
proving the opposite inequality, so that the desired formula
for the right derivative of $\Sigma$ follows.
A similar argument provides the corresponding formula for the left derivative.
\\
Assume now that $z\in\mathcal{E}\setminus\mathcal{O}_b=\mathcal{E}_\Sigma$.
Notice that, beside \eqref{part-fin}, for all $\eta \in \Rn$, it holds
\[
\irn\frac{\partial V}{\partial \eta}(z)\vfi_z^2
+\frac{\partial W}{\partial \eta}(z)\psi_z^2=0.
\]
Hence, since $(\varphi_z,\psi_z)\in\SS(z)$, by formula \eqref{form-dder} we have
\begin{equation*}
\left(\frac{\partial\Sigma}{\partial \eta}\right)^{+}\!\!(z)\leq 0
\end{equation*}
Then, by the definition of $(-\Sigma)^0(z;\eta)$, we get
\begin{equation*}
(-\Sigma)^0(z;\eta)\geq\left(\frac{\partial (-\Sigma)}
{\partial \eta}\right)^{+}\!\!(z)\geq 0,\quad
\text{for every $\eta\in\R^3$}.
\end{equation*}
In turn $0\in \partial_C(-\Sigma)(z)$ and, since
$\partial_C (-\Sigma)(z)=-\partial_C \Sigma(z)$ (see \cite{clarke}), we
obtain $z\in{\rm Crit}_C(\Sigma)$, which concludes the proof of (a).
\\
If $V$ and $W$ are also bounded from above, by choosing
$b_0^\infty$ and $b_1^\infty$ as in \eqref{bstar1}-\eqref{bstar2} we get
${\mathcal O}_b=\R^3$ for all $b\leq b_0^\infty$ (as $b_0^\infty\leq b_z$ for every $z$),
and ${\mathcal O}_b=\emptyset$ for all $b>b_1^\infty$ (as $b_1^\infty\geq b_z$ for every $z$),
thus immediately proving assertion (b). Finally, if $b>b_2^\infty$ the last assertion
of the theorem follows immediately from Proposition \ref{leastp}.
\end{proof}

\end{document}